\newtheorem{theorem}{Th\'eor\`eme}[section]
\newtheorem{lemma}[theorem]{Lemme}
\newtheorem{proposition}[theorem]{Proposition}
\newtheorem{corollary}[theorem]{Corollaire}
\newtheorem{rappel}[theorem]{}
\newtheorem{problem}[theorem]{Probl\`eme}
\newtheorem{propriete}[theorem]{Propri\'et\'e}
\theoremstyle{definition}
\newtheorem{definition}[theorem]{D\'efinition}
\newtheorem{remark}[theorem]{Remarque}
\newtheorem*{rem*}{Remarque}
\newtheorem*{thm*}{Théorème}
\newtheorem{notation}[theorem]{Notation}
\newtheorem{question}[theorem]{Question}
\newtheorem{example}[theorem]{Exemple}
\newtheorem{conjecture}[theorem]{Conjecture}
\newtheorem{exercice}[theorem]{Exercice}
\newcommand{\bi}{\begin{itemize}}
\newcommand{\ei}{\end{itemize}}
\newcommand{\be}{\begin{enumerate}}
\newcommand{\ee}{\end{enumerate}}
\newcommand{\bpf}{\begin{proof}}
\newcommand{\epf}{\end{proof}}
\newcommand{\bpro}{\begin{propriete}}
\newcommand{\epro}{\end{propriete}}
\newcommand{\bt}{\begin{theorem}}
\newcommand{\et}{\end{theorem}}
\newcommand{\brap}{\begin{rappel}}
\newcommand{\erap}{\end{rappel}}
\newcommand{\bnt}{\begin{notation}}
\newcommand{\ent}{\end{notation}}
\newcommand{\bd}{\begin{definition}}
\newcommand{\ed}{\end{definition}}
\newcommand{\ble}{\begin{lemma}}
\newcommand{\ele}{\end{lemma}}
\newcommand{\bpr}{\begin{proposition}}
\newcommand{\epr}{\end{proposition}}
\newcommand{\bre}{\begin{remark}}
\newcommand{\ere}{\end{remark}}
\newcommand{\bco}{\begin{corollary}}
\newcommand{\eco}{\end{corollary}}
\newcommand{\beq}{\begin{equation}}
\newcommand{\eeq}{\end{equation}}
\newcommand{\bq}{\begin{question}}
\newcommand{\eq}{\end{question}}
\newcommand{\bp}{\begin{problem}}
\newcommand{\ep}{\end{problem}}
\newcommand{\beqn}{\begin{eqnarray*}}
\newcommand{\eeqn}{\end{eqnarray*}}
\newcommand{\bex}{\begin{example}}
\newcommand{\eex}{\end{example}}
\newcommand{\ber}{\begin{exercice}}
\newcommand{\eer}{\end{exercice}}
\newcommand{\sk}{\smallskip}
\newcommand{\mk}{\medskip}
\newcommand{\bk}{\bigskip}
\newcommand{\bcj}{\begin{conjecture}}
\newcommand{\ecj}{\end{conjecture}}
\title[Tissus alg\'Ebriques exceptionnels]{
 Tissus   alg\'Ebriques exceptionnels
}
\author[L. Pirio]{Luc Pirio\vspace{0.3cm}\\
\small{IRMAR,  UMR 6625 du CNRS,  Universit\'e Rennes 1}\\ \small{Campus de Beaulieu 35042 Rennes Cedex, France}
\vspace{0.15cm}\\
E-mail: {\it luc.pirio@univ-rennes1.fr}}
\address{IRMAR,  UMR 6625 du CNRS,  Universit\'e Rennes 1, Campus de Beaulieu, 35042 Rennes Cedex, France}
\email{luc.pirio@univ-rennes1.fr}
\begin{document}

\maketitle

\begin{abstract}
Dans \cite{PTweb}, nous avons montré que pour $r>1$, $n\geq 2$ et 
$d\geq (r+1)(n-1)+2$, 
 un $d$-tissu de type $(r,n)$ de rang maximal est  algébrisable au sens classique, sauf peut-être lorsque  $n\geq 3$ et $d= (r+2)(n-1)+1$. 
On s'int\'eresse ici à ce cas particulier.   
Sous ces hypothèses sur $n$ et $d$, on construit des exemples de ``{\it tissus alg\'ebriques exceptionnels}'': il s'agit de tissus algébriques d'incidence de rang maximal  qui ne  sont pas  algébrisables au sens classique.
\end{abstract}


Ce texte peut être vu comme une suite de \cite{PTweb}. On utilise aussi de façon cruciale plusieurs r\'esultats de   
\cite{PTgeom} et de \cite{piriorusso} qui concernent un probl\`eme de g\'eom\'etrie projective  
 intimement lié 
 au problème   considéré ici.  Il est donc naturel de se placer dans un cadre analytique complexe et c'est ce que nous ferons dans tout l'article. 
 
 \section{Introduction et présentation des résultats}
 
Nous commençons par introduire rapidement la théorie des tissus et par formuler le problème qui nous occupe dans cet article.  Le lecteur intéressé pourra consulter l'ouvrage de référence \cite{BB} (en allemand et un peu daté), l'article \cite{CG} qui contient une bonne exposition de la théorie ou encore le livre  plus récent \cite{PPbook}.  Ces références traitent principalement du cas de la codimension 1 mais le lecteur pourra y trouver des détails ainsi qu'une mise en perspective générale.
 Concernant les tissus de codimension quelconque, nous renvoyons à notre précédent article \cite{PTweb} et aux références qui s'y trouvent. 
\subsection{Tissus, relations abéliennes et rang}

\subsubsection{}
Par définition, un {\bf $\boldsymbol{d}$-tissu} $\mathcal T$ sur une variété complexe $X$ est  (localement) la donnée de $d$-feuilletages $\mathcal F_1,\ldots , \mathcal F_d$ sur $X$. Classiquement, on demande en plus que ces derniers soient de même codimension $r$ et en \og position générale\fg, c'est-à-dire que les espaces tangents des feuilles des $\mathcal F_i$  s'intersectent aussi transversalement que possible en un point générique de $X$.   On dit que $\mathcal T$ est de {\bf type} $\boldsymbol{(r,n)}$ lorsque la  codimension commune $r$ des ses feuilletages divise la dimension de l'espace ambiant, i.e. lorsque $\dim(X)=nr$ pour un certain entier $n\geq 2$.  Dans ce cas, l'hypothèse de position générale prend la forme précise suivante: en tout point $x\in X$ et  pour tout sous-ensemble $I\subset \{1,\ldots,d\}$ à $n$ éléments, les espaces tangents $T_x \mathcal F_{i}$, $i\in I$  sont en somme directe dans $T_xX$. \sk 

Deux tissus 
sont dit être {\bf équivalents} s'il existe un  biholomorphisme local qui envoie l'un sur l'autre. 
La {\bf géométrie des tissus} s'occupe de la classification des tissus modulo cette notion d'équivalence. 
\mk 

\subsubsection{}
\label{SS:TV}
L'exemple classique est celui du {\bf tissu algébrique} $\boldsymbol{\mathcal T_V}$ associé à une sous-variété algébrique projective réduite $V$ d'un espace projectif.  Si $V$ est de dimension $r\geq 1$ et de codimension $n-1\geq 1$, un sous-espace projectif générique $P$ de dimension $n-1$ de l'espace projectif ambiant $\mathbb P^{n+r-1}$ intersecte $V$ en $d=\deg(V)$ points distincts $p_1(P),\ldots,p_d(P)$. Par définition, ${\mathcal T_V}$ est le $d$-tissu sur la variété grassmannienne $G_{n-1}(\mathbb P^{n+r-1})$ dont les feuilles en $P$ sont les variétés de Schubert formées des sous-espaces de dimension $n$ qui passent par $p_i(P)$,  pour $i=1,\ldots,d$.  
En général, par exemple si  $V$ est supposée irréductible et 
non-dégénérée dans $\mathbb P^{n+r-1}$, $\mathcal T_V$ est de type $(r,n)$.
%
Un tissu est dit être {\bf  algébrisable} (au sens classique) s'il est équivalent à un tissu algébrique  ${\mathcal T_V}$ de cette sorte.
 
\sk

\subsubsection{}
L'un des attraits de la géométrie des tissus est qu'on peut la voir comme une sorte de généralisation de la géométrie algébrique projective classique (cf. \cite{C}). Par exemple, la notion de forme holomorphe se généralise aux tissus de la façon suivante: si un (germe de) tissu $\mathcal T=(\mathcal F_i)_{i=1}^d$ sur $(\mathbb C^{nr},0)$ est défini localement via de $d$ submersions $u_1,\ldots,u_d: (\mathbb C^{nr},0)\rightarrow (\mathbb C^{r},0)$, 
une {\bf relation abélienne} de $\mathcal T$ est un $d$-uplet $
(\omega_i)_{i=1}^d$
de $r$-formes différentielles holomorphes (nécessairement fermées) 
$\omega_i \in u_i^*\big(\Omega^r(\mathbb C^r,0)\big)$,  pour $i=1,\ldots,d$, 
telles que $\sum_{i=1}^d \omega_i=0$. 
 L'espace $\mathfrak A(\mathcal T)$ des relations abéliennes de $\mathcal T$ admet une structure naturelle d'espace vectoriel complexe dont la  dimension $\dim( \mathfrak A(\mathcal T))$, aussi 
 appelée le {\bf rang} de $\mathcal T$ et notée ${\rm rg}(\mathcal T)$,  est un invariant analytique du tissu.  \sk
 
\subsubsection{}
\label{SS:RAdeTV}
Soit $\mathcal T_V$ un tissu algébrique comme en {\it \ref{SS:TV}}, avec $V$ supposée lisse (pour simplifier).  Localement au voisinage de $P\in G_{n-1}(\mathbb P^{n+r-1})$, on a  $d$ applications $P'\mapsto p_i(P')$ telles que $P'\cdot V=\sum_{i=1}^d p_i(P')$ comme 0-cycle sur $V$. On vérifie que les $p_i$ sont des submersions qui définissent $\mathcal T_V$ près de $P$. Il découle alors du théorème d'addition d'Abel \cite{Griffiths} que pour toute $r$-forme holomorphe sur $V$, on a $\sum_{i=1}^d p_i^*(\omega)=0$ et  on peut donc interpréter  tout $d$-uplet $(p_i^*(\omega))_{i=1}^d$ comme une relation abélienne de $\mathcal T_V$.  On en déduit un morphisme linéaire injectif 
 $H^0(V,\Omega_V^r)\hookrightarrow \mathfrak A(\mathcal T_V)$ qui est en fait  surjectif, ce dernier point résultant du théorème d'Abel-Inverse (voir \cite{Griffiths} à nouveau). 
  Il en découle en particulier que 
  ${\rm rg}(\mathcal T_V)=h^0(\Omega_V^r)$: le rang du tissu algébrique associé à $V$ est égal au genre de cette dernière\footnote{Lorsque $V$ est singulière, il faut considérer les $r$-formes différentielles abéliennes sur $V$ pour avoir des résultats analogues, voir \cite{Griffiths,Barlet,HenkinPassare}.}.
  \subsubsection{}
  Un résultat important du domaine, mais assez facile au fond (voir \cite[\S2.1]{PTweb} pour une preuve élémentaire),  est l'existence de majorations explicites et optimales sur le rang. 
Si $\mathcal T$ est un $d$-tissu de type $(r,n)$, on doit  à Bol (lorsque $r=1$ et $n=2,3$), à Chern (pour $r=1$ et $n\geq 2$ quelconque) et enfin à Chern et Griffiths \cite{CGborne},  la majoration 
\begin{equation*}
\label{E:rg-rho}
{\rm rg}(\mathcal T)\leq \rho_{r,n}(d)
\end{equation*}
sur le rang de $\mathcal T$, où 
$$\rho_{r,n}(d)=\sum_{\sigma\geq 0} { r-1+\sigma \choose r-1 }\max\big(  d-(r+\sigma)(n-1)-1  , 0 \big)$$ désigne la {\bf constante de 
Castelnuovo-Harris}. D'après la généralisation de Harris \cite{Harris} d'un résultat classique de Castelnuovo sur les courbes,  $\rho_{r,n}(d)$ est égal au maximum des genre $g(V)$ pour $V$ une sous-variété algébrique irréductible de $\mathbb P^{n+r-1}$ de dimension $r$ et de degré $d$. 
\sk 

Un $d$-tissu $\mathcal T$ de type $(r,n)$ est {\bf de rang maximal} si ${\rm rg}(\mathcal T)= \rho_{r,n}(d)$.

\subsection{Le problème de l'algébrisation des tissus} 
Un résultat classique de Lie,  énoncé par lui  dans le cadre des surfaces de double translation, 
se traduit en géométrie des tissus par le fait qu'un  4-tissu en courbes dans le plan de rang maximal $\rho_{1,2}(4)=3$ est algébrisable.   
\sk 

Il est bien connu que pour envisager qu'un tel résultat se généralise aux tissus de type $(r,n)$,  il faut supposer $d$ ``suffisamment grand''  et, plus précisément, que $d$ soit supérieur ou égal à la {\bf borne de Poincaré}
$$
d_P(r,n)=(r+1)(n-1)+2.
$$

Un problème central en géométrie des tissus (explicitement formulé et commenté à  la toute fin de \cite{CGadd} par exemple\footnote{Voir aussi \cite{PTweb} et les références qui s'y trouvent.}) est celui 
de l'{\bf algébrisation des tissus}. Plus précisément, il s'agit de 
\begin{quote}
 {\it classifier les $d$-tissus de type $(r,n)$  qui sont de rang 
 maximal  $\rho_{r,n}(d)$
pour $r\geq 1, n\geq 2$ et 
$d\geq d_P(r,n)$.}  
\end{quote}
\sk 


Si ce problème a été abordé avec un certain succès dès les années 1930, ce n'est que très récemment qu'il a été résolu dans la plupart des cas. 
\begin{thm*}
{\it Un $d$-tissu de type $(r,n)$  
 de rang maximal $\rho_{r,n}(d)$ 
  est alg\'ebri\-sable au sens classique  si\sk
\begin{itemize}
\item[$\bullet$]  $r=1$ et $n=2$ avec  
$d=3$ ou $d=4$ 
 {\rm (}cf. \cite{Lie,Poincare}{\rm )}; \sk 
\item[$\bullet$]  $r=1$ et  $n\geq 3$  {\rm (}cf. \cite{Bol,Trepreau}{\rm )}; \sk 
\item[$\bullet$]  $r>1$ et   $n\geq 2$, avec  $d\neq (r+2)(n-1)+1
$ si $n>2$
     {\rm (}cf. \cite{PTweb}{\rm )}.\mk 
\end{itemize}}
\end{thm*}

D'autre part, d'apr\`es \cite{MPP}, on sait qu'il existe  des $d$-tissus plans  de rang maximal qui ne sont pas alg\'ebrisables pour tout $d\geq 5$. 
Restait  donc ouverte la question de   savoir si un tissu de type $(r,n)$  de rang maximal est alg\'ebrisable au sens classique  lorsque $r,n$ et $d$ vérifient
\begin{equation}
 \label{E:condiNUM}
r>1\,,  \qquad n\geq 3\, \qquad \mbox{ et } \,  \qquad d=d_{exc}(r,n), 
\end{equation}
où l'on a posé $$d_{exc}(r,n)=(r+2)(n-1)+1.$$

Vu les  résultats positifs que l'on vient de mentionner, on pourrait penser  que c'est encore le cas.  Or c'est l'inverse qui se produit: on montre dans cet article 
que la situation dans ce cas est plus riche que ce que l'on pouvait na\"ivement attendre, du moins pour certaines valeurs de  $n$.
\sk 

 Le cas des tissus de codimension 1 étant traité\footnote{Partiellement lorsque  $n=2$, mais c'est une autre histoire...}, on suppose   $r$ strictement plus grand que 1 dans toute la suite. Lorsque $r>1$ et $n\geq 2$ sont fixés, on note $d_P$ et $d_{exc}$ pour $d_P(r,n)$ et $d_{exc}(r,n)$ respectivement.
 
 \subsection{Quelques définitions et notations}
 Il convient  de poser quelques définitions afin de formuler plus clairement les choses.   \sk 
 
\subsubsection{}
 \'Etant donnés: \smallskip
\begin{itemize}
 \item[$\bullet$]    une variété projective irréductible $X$;\smallskip
\item[$\bullet$]    une sous-vari\'et\'e alg\'ebrique  r\'eduite $Z
\subset   X$  de dimension $r$;\smallskip
\item[$\bullet$]    une variété algébrique $\Sigma$  de dimension $rn$    paramétrant une famille de cycles irréductibles  $C_\sigma \subset X$, avec $\sigma \in \Sigma$, dont l'élément général intersecte proprement 
$Z$ en dimension 0, 
 \medskip 
\end{itemize}
nous dirons que le triplet $(X,\Sigma,Z)$ est {\bf admissible} si  la relation d'incidence entre les éléments de $\Sigma$ et les points de $Z$ induit un tissu de type $(r,n)$ sur   un ouvert de Zariski de la variété $\Sigma$.  Dans ce cas, on note ${\mathcal T}_{(X,\Sigma,Z)}$ ce tissu et l'on appelle {\bf tissus algébriques d'incidence}  les tissus obtenus via cette construction.   
\sk 

 Comme cela l'a été expliqué dans \cite{PTweb}, c'est une g\'en\'eralisation de la notion classique de tissu alg\'ebrique. 
En effet, si  $V\subset \mathbb P^{n+r-1}$ est une vari\'et\'e alg\'ebrique de  dimension $r$ dont l'intersection avec un $({n-1})$-plan g\'en\'erique
$ P
\in G_{n-1}(\mathbb P^{n+r-1})
$ est un 0-cycle r\'eduit  en position générale dans $P$, alors le tissu  $\mathcal T_V$ classiquement associ\'e  \`a $V$  peut aussi être défini comme le  tissu alg\'ebrique d'incidence   associé au triplet admissible 
\begin{equation}
\label{E:TripletClassique}
\big(\mathbb P^{n+r-1}, G_{n-1}(\mathbb P^{n+r-1}),V\big) .\medskip 
\end{equation}

\subsubsection{} Lorsque $(X,\Sigma,Z)$ est un triplet comme ci-dessus, on peut définir la {\bf trace} d'une $r$-forme  rationnelle $\omega$ sur $Z$ relativement à la famille de cycles paramétrée par $\Sigma$, que l'on note ${\rm Tr}_\Sigma(\omega)$. On montre que c'est une $r$-forme différentiel\-le rationnelle sur $\Sigma$. Comme dans le cas classique décrit en {\it \ref{SS:RAdeTV}}, sous l'hypothèse que le triplet considéré est  admissible, 
on vérifie qu'une relation de la forme ${\rm Tr}_\Sigma(\omega)= 0$ s'interprète de façon naturelle   comme une relation abélienne du tissu 
 associé à ${(X,\Sigma,Z)} $. 
\sk

\subsubsection{} Nous dirons qu'un $d$-tissu algébrique d'incidence    ${\mathcal T}=\mathcal T_{(X,\Sigma,Z)}$  de type $(r,n)$ est un {\bf tissu algébrique exceptionnel} si \smallskip 
\begin{itemize}
\item[$\bullet$] son  rang 
 est maximal, égal à $\rho_{r,n}(d)$; \smallskip
\item[$\bullet$] la trace ${\rm Tr}_\Sigma$ induit un isomorphisme entre un espace de $r$-formes 
 rationnelles sur $Z$ et l'espace 
des relations abéliennes de ${\mathcal T}$;\sk 
 \item[$\bullet$] le tissu $\mathcal T$  n'est pas algébrisable au sens classique, c'est-à-dire n'est pas  analytiquement  équivalent (même localement) à un tissu  algébrique d'incidence  associé à un triplet admissible du type (\ref{E:TripletClassique}). \smallskip
\end{itemize}
\sk 

\subsubsection{}
Le problème de l'algébrisation des tissus fait apparaître un lien profond avec une classe particulière de variétés projective qui a été considérée dans \cite{PTgeom} et que l'on va maintenant définir. 
\sk 

Une sous-variété réduite et irréductible $X$  d'un espace projectif est  dite {\bf $n$-recouverte par des courbes rationnelles normales (CRN) de degré $q\geq 1$} si par $n$ points généraux de $X$ passe une telle courbe incluse dans $X$. 
 Si $\dim(X)=r+1$, l'espace projectif qu'engendre une telle $X$ est de dimension au plus $\rho_{r,n}(d)-1$, où $d=q+r(n-1)+2$ comme on le voit en combinant  \cite[Théorème 1.2]{PTgeom} et la preuve du Lemme 4.2 de \cite{PTweb}.  On définit alors $\boldsymbol{\mathcal X_{r+1,n}(q)}$ comme étant l'ensemble des $X$ $n$-recouvertes par des CRN de degré $q$ et qui vérifient 
 $\dim(  \langle X \rangle )=\rho_{r,n}(d)-1$. \sk 
 
 Soit $X$ une variété appartenant à $\mathcal X_{r+1,n}(q)$. 
Un $n$-uplet  $x=(x_1,\ldots,x_n)\in X^n$ est {\bf admissible} si les  $x_i$ sont des points lisses de $X$ et si les points de tout  $n$-uplet $x'=(x_i')_{i=1}^n$ suffisamment proche de $x$ (en particulier pour $x'=x$!) sont sur une même CRN de degré $q$ incluse dans $X$.  Une telle CRN  de  $X$ sera dite admissible. 
 Enfin, pour $m\leq n$, un $m$-uplet $(x_1,\ldots,x_m)\in X^m$ est admissible s'il peut être complété en un $n$-uplet admissible.  On notera $X_{\rm adm}$ le sous-ensemble ouvert de $X$ formé des points admissibles.   \sk 
   
   Pour $X\in \mathcal X_{r+1,n}(q)$, il existe une unique composante irréductible de la famille des CRN de degré $q$ incluses dans $X$ qui est $n$-recouvrante (cf. \cite[Lemme 2.8]{PTgeom}). Cette composante sera notée $\Sigma_q(X)$. 
   D'après \cite[Théorème 2.11]{PTgeom},  le  sous-ensemble $\Sigma_q(X)_{\rm adm}$ formé des courbes admissibles est un ouvert de $\Sigma_q(X)$  et est une variété complexe lisse de dimension $nr$.
    \mk 
   
Ces définition étant posées, on est en mesure de rappeler le résultat d'algébrisation général obtenu dans \cite{PTweb}.

  \subsection{Modèle canonique d'un tissu de rang maximal}\sk

 Soit  $\mathcal T=(\mathcal F_i)_{i=1}^d$ un germe de $d$-tissu   de type $(r,n)$ 
  de rang maximal avec $d\geq d_P$.  Pour simplifier, on note $\mathfrak A$ l'espace $\mathfrak A(\mathcal T)$ de ses relations abéliennes en $x_0$.  Par hypothèse, c'est un 
  espace vectoriel de dimension $\rho_{r,n}(d)>0$. 
 \mk 
 
\subsubsection{} 
\label{SS:141}
Soit $(M,x_0)$ le germe de variété de dimension $rn$ sur lequel $\mathcal T$ est défini. 
 Pour tout $x\in (M,x_0)$,  l'évaluation en $x$ des $j$-ièmes composantes des relations abéliennes de $\mathcal T$ définit une forme  linéaire sur $\mathfrak A$,   notée $ev_j(x)$,    à valeurs 
 dans l'espace vectoriel de dimension 1 des normales du tissu constant 
 induit  par $\mathcal T$ en $x$.   Ce dernier étant  
 de rang maximal, il est en particulier  \og de rang maximal en valuation 0\fg\footnote{Cela signifie que $\mathcal T$ admet
 exactement $d-r(n-1)-1$ relations abéliennes linéairement  indépendantes de valuation 0 en $x_0$; 
 c'est le nombre maximal possible, cf. \cite[Proposition 2.1]{PTweb}.} donc   $ev_j(x)$ n'est pas triviale et par conséquent son noyau ${\rm Ker}(ev_j(x))$ est  un hyperplan. Vu comme un point de $\mathbb P \mathfrak A^*$, on le note $\kappa_j(x)$.  
 
 En faisant varier $x$, on obtient un germe  d'application holomorphe
  $$\kappa_j: \big(M,x_0\big)\rightarrow \big( \mathbb P \mathfrak A^*, \kappa_j(x_0)\big).$$ 
 
Dans \cite{PTweb}, modulo le choix d'une base qui permet d'identifier $\mathfrak A$ avec son dual, on démontre les points suivants: \sk
\begin{enumerate}
\item[$(i).$] pour tout $j=1,\ldots,d$,  l'application $\kappa_{j}$ 
est une intégrale première de rang $r$ de   $\mathcal F_j$.  
L'image   $Z_j={\rm Im}(\kappa_j)$    est un germe en $ \kappa_j(x_0)$  de sous-variété analytique lisse de dimension $r$ de  $\mathbb P \mathfrak A^*$;

 \smallskip 
  \item[$(ii).$] pour tout $x\in (M,x_0)$, les $d$ points $\kappa_{1}(x),\ldots,\kappa_{d}(x)$  sont sur une même CRN de degré  $q=d-r(n-1)-2$
   de $\mathbb P{\mathfrak A}^*$, notée $\sigma(x)$;

 \sk
 \item[$(iii).$]  la réunion  des CRN $\sigma(x)$  quand $x$ varie dans $(M,x_0)$, est une sous-variété analytique lisse de  $\mathbb P{\mathfrak A}^*$,  de dimension $r+1$, dont l'adhérence de Zariski $X_{\mathcal T}$ appartient à  
  $\mathcal X_{r+1,n}(q)$; 
    \sk
 \item[$(iv).$]   pour tout $x\in (M,x_0)$, la courbe $\sigma(x)$ est admissible 
 et l'application 
 \begin{align}
 \sigma : (M,x_0)  & \longrightarrow \big( \Sigma_q(X_{\mathcal T})_{\rm adm}, \sigma(x_0)\big)
  \nonumber\\
  x & \longmapsto \sigma(x) \nonumber
 \end{align}
 est un germe de biholomorphisme;

 \sk  
  \item[$(v).$]   pour tout $j=1,\ldots,d$ et tout $x\in (M,x_0)$, le germe d'hypersurface $Z_j$ de $X_{\mathcal T}$ et la CRN  $\sigma(x)$ s'intersectent transversalement en  $\kappa_{j}(x)$;

  \sk 
    \item[$(vi).$]     les $Z_j$ sont les germes, en les points $\kappa_j(x_0)$, d'une même
 hypersurface algébrique  $Z_{\mathcal T}$ de $X_{\mathcal T}$ telle que pour tout 
 point $x\in (M,x_0)$, on a 
 $Z_{\mathcal T}\cdot \sigma(x)=\kappa_1(x)+\cdots +\kappa_d(x)
 $ en tant que 0-cycle;
  
 \sk
 \item[$(vii).$] l'application $\omega\mapsto (\kappa_j^\star( \omega))_{j=1}^d$ induit un isomorphisme 
linéaire entre un certain espace vectoriel $\mathcal A(Z_{\mathcal T})$ de $r$-formes rationnelles sur 
 $Z_{\mathcal T}$  et l'espace $\mathfrak A$ des relations abéliennes de $\mathcal T$.  De plus, les restrictions à $
 Z_{\mathcal T} \cap (X_{\mathcal T})_{\rm adm}
 $ des éléments de $\mathcal A(Z_{\mathcal T})$ sont abéliennes.
 \smallskip 
 \end{enumerate}
 
 On indique les résultats de \cite{PTweb} qui permettent d'établir les points listés ci-dessus: 
 $(i),\, (ii)$ et $(v)$ sont donnés par le Lemme 3.7 de cet article, résultat clef qui donne aussi que l'application $\sigma$ du point $(iv)$ est injective.  Le reste de $(iv)$ est obtenu dans la Section 4.2.  Le point $(iii)$ découle de  la Proposition 3.11 et du   Lemme 4.2. Enfin, $(vi)$ et $(vii)$ sont donnés par le  Théorème 4.6. \mk 
 
\subsubsection{Modèle canonique et algébrisation} Soit alors $ \mathcal T_{\rm can}$,  le  {\bf modèle canonique}\,   de $\mathcal T$,  défini comme son poussé-en-avant  par $\sigma$. 
  Du point $(iv)$ ci-dessus, il vient que c'est un tissu de type $(r,n)$ isomorphe à $\mathcal T$, qui ne dépend que de sa classe d'isomorphie.  Des points $(v)$ et $(vi)$, on déduit ensuite que $\mathcal T_{\rm can}$ n'est rien d'autre que le germe en $\sigma(x_0)$ du tissu  algébrique d'incidence associé  au triplet   $(X_{\mathcal T}, \Sigma_q(X_{\mathcal T}),Z_{\mathcal T})$ qui est  admissible. 
 On a ainsi obtenu une description algébrique uniforme des (modèles canoniques des) tissus de type $(r,n)$ de rang maximal. \mk

On rappelle (cf. \cite[Définition 1.8]{PTgeom}) qu'une  variété $X\in \mathcal X_{r+1,n}(q)$ est dite {\bf standard} s'il existe une sous-variété $Y$  de $\mathbb P^{r+n-1}$,  de dimension $r+1$ et  de degré minimal $n-1$ telle que les paires d'incidences $(X
,\Sigma_q(X))$ et $(Y, \Sigma_{n-1}(Y))$ sont birationnellement isomorphes, i.e.  
s'il existe un isomorphisme birationnel entre $X $ et $ Y$  qui induit une équivalence de même nature  entre les familles de courbes rationnelles 
 $\Sigma_q(X)$~et~$\Sigma_{n-1}(Y)$.
\mk

\`A partir des résultats de  \cite{PTgeom} et de \cite{PTweb}
, on peut formuler le  
\bt
\label{T:1}
\begin{enumerate}
\item  Le tissu $\mathcal T$ est  algébrisable au sens classique si et seulement si sa variété de Blaschke   $X_{\mathcal T}$ est standard. 
\sk  
\item C'est toujours le cas, sauf peut être lorsque $n\geq 3$ et $q=2n-3$. 
\end{enumerate}
\et
%
\hspace{-0.4cm}(Le premier point  s'obtient en combinant \cite[Théorème 1.23]{PTweb} et \cite[Théorème 1.9]{PTgeom} tandis que {\it 2.} découle facilement de \cite[Théorème 1.7]{PTgeom}).
\sk 

 De la discussion qui précède le Théorème \ref{T:1}, il ressort 
que la possibilité de construire  des tissus algébriques exceptionnels  va dépendre  en premier lieu de l'existence de variétés des classes $\mathcal X_{r+1,n}(q)$ qui sont {\bf exceptionnelles}, c'est-à-dire qui ne sont pas standards. D'après le  Théorème 1.11 de \cite{PTgeom},  ce n'est éventuellement 
 possible que lorsque  
 $$n\geq 3\qquad \mbox{ et  } \qquad q=2n-3.$$  
 
 On suppose ces conditions numériques vérifiées dans ce qui suit.
%

\subsubsection{Variétés exceptionnelles 
 et tissus  associés}

La détermi\-na\-tion des variétés exceptionnelles des classes $\mathcal X_{r+1,n}(2n-3)$  est un joli probl\`eme de g\'eom\'etrie  projective  qui a \'et\'e  abord\'e en premier lieu à la fin de \cite{PTgeom}, où  sont décrites deux familles de variétés  de cette sorte de  dimension quelconque pour $n=3$ et $n=4$  ainsi que deux exemples  tridimensionnels    pour $n=5$ et $n=6$. \smallskip 

Dans \cite[\S 4.3]{piriorusso}, les auteurs  donnent une construction plus générale d'éléments 
 exceptionnels de la classe  $\mathcal X_{r+1,3}(3)$  à partir de certaines algèbres non-associatives dites \og de Jordan\fg.  Plus récemment dans \cite[\S3.3.2]{PR-XJC}, ils démontrent que lorsque $n=3$, toute variété exceptionnelle 
 est du  type \og Jordan\fg \, qui vient d'être évoqué.  En particulier, sont obtenues  des classifications complètes explicites des variétés exceptionnelles  lisses ou de petite dimension des classes $\mathcal X_{r+1,3}(3)$ (cf. \cite[Theorem 5.7]{piriorusso}). 

\begin{table}[h]
\label{Table}
 \centering
 \begin{tabular}{|c|c|l|}
 \hline
 {\bf Variété  exceptionnelle}    & {\bf Classe} 
 &\; {\bf Entiers} $r,n$   \\
 \hline    
\begin{tabular}{c}
${\rm Seg}(\mathbb P^1\times Q)\subset \mathbb P^{2r+3}$ avec  
 \\   $Q\subset \mathbb P^{r+1}$  hyperquadrique lisse
\end{tabular}
& $\mathcal{X}_{r+1,3}(3)$
& \,  $r\geq 2$, $n=3$\,   \\ \hline    
\begin{tabular}{c}
Image de $\mathbb P^{r+1}$ par $|3H-2Q|$ avec 
 $Q\subset \mathbb P^{r+1}$ \\  variété quadrique lisse de dimension $r-2$
\end{tabular}
& $\mathcal{X}_{r+1,4}(5)$
&\,  $r\geq 2$  ,  $n=4$  \,  \\ \hline    
\begin{tabular}{c}
Grassmannienne lagrangienne \\
$ LG_3(\mathbb C^6)\subset \mathbb P^{11}$
\end{tabular} & $\mathcal{X}_{6,3}(3)$
 &  \, $r=5$, $n=3$ \,
    \\ \hline 
   \begin{tabular}{c}
Grassmannienne  usuelle \\ 
$ G_3(\mathbb C^6)\subset \mathbb P^{19}$
\end{tabular} & $\mathcal{X}_{9,3}(3)$ &\, $r=8$, $n=3$\,
    \\ \hline 
 \begin{tabular}{c}
Grassmannienne orthogonale \\
$ OG_6(\mathbb C^{12})\subset \mathbb P^{31}$
\end{tabular} & $\mathcal{X}_{15,3}(3)$ &  \, $r=14$, $n=3$ \,
    \\    \hline 
 \begin{tabular}{c}
Variété  homogène sous $E_7$ \\
$E_7/P_7 \subset \mathbb P^{55}$ 
\end{tabular} & $\mathcal{X}_{27,3}(3) $ &\,    $r=26$, $n=3$\, 
    \\ \hline 
\begin{tabular}{c}
Variété  de Véronèse \\
$v_3(\mathbb P^3)\subset \mathbb P^{19}$
\end{tabular} & $\mathcal{X}_{3,6}(9)$ &  \, $r=2$, $n=6$ \, 
    \\ \hline 
\begin{tabular}{c}
Projection tangentielle  \\
 $\tau_x( v_3(\mathbb P^3))\subset \mathbb P^{15}$
\end{tabular} & $\mathcal{X}_{3,5}(7) $ &\,  $r=2$, $n=5$   \,
    \\ \hline 
\begin{tabular}{c}
Double projection tangentielle \\
  $\tau_{xy}( v_3(\mathbb P^3))\subset \mathbb P^{11}$
\end{tabular} & $\mathcal{X}_{3,4}(5)$ &  \, $r=2$, $n=4$  \,
    \\ \hline 
\end{tabular}\vspace{0.25cm} 
\caption{Variétés exceptionnelles considérées dans cet article.
 }\vspace{-0.4cm}
\end{table}


 C'est \`a partir  des variétés exceptionnelles  
de la table  
 ci-dessus  que l'on construit des tissus algébriques exceptionnels dans cet article. 
  Si $X$ désigne une  variété  de cette table,  on a  la variante suivante du célèbre théorème d'addition d'Abel  pour la paire d'incidence $(X,\Sigma_{q}(X))$ (cf. \cite{Griffiths} ou Section \ref{S:Abel} plus bas):  
\begin{prop}
Soit   $Z$ une  hypersurface réduite de $ X$.
\begin{enumerate}
\item  La trace 
 d'une 
forme rationnelle  sur $Z$ 
est une forme rationnelle sur $\Sigma_q(X)$;
\item  La trace  d'une 
$r$-forme finie\footnote{Il s'agit des formes différentielles rationnelles sur $Z$ qui admettent  un modèle globalement holomorphe sur au moins une désingularisation de $Z$, cf. Section \ref{S:Abel}.}  sur $Z$ 
est identiquement nulle.
\end{enumerate}
\end{prop}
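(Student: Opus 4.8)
The plan is to realize the trace as the pushforward of a pullback along the incidence correspondence, and to recognize statement (2) as the avatar, in this setting, of Abel's vanishing theorem for differentials of the first kind. Write $\Sigma = \Sigma_q(X)$ and form the incidence variety $I = \overline{\{(z,\sigma) \in Z \times \Sigma : z \in C_\sigma\}}$, with its two projections $\pi_Z : I \to Z$ and $\pi_\Sigma : I \to \Sigma$. Since the triplet $(X,\Sigma,Z)$ is admissible, $\pi_\Sigma$ is dominant and generically finite of degree $N = C_\sigma \cdot Z$: over a generic $\sigma$ its fibre is the reduced $0$-cycle $C_\sigma \cdot Z = \kappa_1(\sigma) + \cdots + \kappa_N(\sigma)$. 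I would then set $\mathrm{Tr}_\Sigma(\omega) = (\pi_\Sigma)_\ast\, \pi_Z^\ast \omega$, which over a small polydisc $U \subset \Sigma$ is the symmetric sum $\sum_{j=1}^{N} \kappa_j^\ast \omega$ over the local branches $\kappa_j : U \to Z$ of $\pi_\Sigma$.

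Part (1) is then formal and works for forms of any degree: $\pi_Z^\ast \omega$ is rational on $I$, and the fibrewise sum of a rational form over a dominant generically finite morphism is rational on the base. Indeed it is invariant under the monodromy group of $\pi_\Sigma$, hence descends; concretely, its coefficients are symmetric functions of the branches $\kappa_j$, so they are rational in $\sigma$. I would dispatch (1) in a line.

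For (2) I would follow Griffiths' two-step form of Abel's theorem: first show that the trace of a finite form has no poles, then kill it by the geometry of $\Sigma$. Let $\nu : \tilde Z \to Z$ be a desingularization on which $\omega$ acquires a holomorphic model $\tilde\omega = \nu^\ast \omega$ (this is precisely the meaning of finiteness). Building the incidence over $\tilde Z$ and passing to a desingularization $\tilde I$, I obtain a generically finite morphism $\tilde\pi_\Sigma : \tilde I \to \widehat\Sigma$ of smooth varieties, $\widehat\Sigma$ a smooth projective model of $\Sigma$, carrying the holomorphic $r$-form $\tilde\pi_{\tilde Z}^\ast \tilde\omega$; for generic $\sigma$, where $C_\sigma$ meets $Z$ transversally at smooth points, this still computes $\mathrm{Tr}_\Sigma(\omega)$. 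The key lemma is that the fibrewise trace of a globally holomorphic form under a generically finite dominant morphism of smooth varieties is again globally holomorphic: away from the discriminant this is clear, and across it the apparent poles created by colliding branches cancel because the trace is the symmetric combination $\sum_j \kappa_j^\ast \tilde\omega$ — on the local model $x \mapsto x^2$ the odd part in the branch coordinate supplies the compensating zero. Hence $\mathrm{Tr}_\Sigma(\omega)$ extends to a regular $r$-form on $\widehat\Sigma$.

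The proof then closes by the second step. Each variety $X$ of the table has a curve-parameter space $\Sigma_q(X)$ that is rational — for the homogeneous examples $G/P$ it is again rational (the relevant family of rational curves being homogeneous), and the remaining cases are rational by the analyses of \cite{PTgeom} and \cite{piriorusso} — so a smooth projective model $\widehat\Sigma$ is rationally connected and satisfies $H^0(\widehat\Sigma, \Omega^r_{\widehat\Sigma}) = 0$. A regular $r$-form on $\widehat\Sigma$ must therefore vanish, giving $\mathrm{Tr}_\Sigma(\omega) = 0$. I expect the genuine difficulty to lie entirely in the holomorphicity step of (2): controlling the trace along the discriminant of $\pi_\Sigma$, where $C_\sigma$ becomes tangent to $Z$ or meets $\mathrm{Sing}(Z)$, and verifying that the finiteness of $\omega$, transported through $\tilde Z$, is exactly what forces the residues to cancel. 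Part (1) and the final vanishing are soft once $\Sigma_q(X)$ is known to carry no holomorphic $r$-form.
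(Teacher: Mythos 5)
Your proposal is correct and takes essentially the same approach as the paper: the paper's proof (see the proof of Corollaire \ref{C:FinieAW}) forms the same incidence diagram with projections $\mu$ and $\nu$, pulls the form back to the incidence variety, invokes Griffiths' trace theorem (Th\'eor\`eme \ref{T:AbelGriffiths}, cited from \cite{Griffiths}) to conclude that the trace of a rational (resp.\ finite) form is rational (resp.\ finite), and then kills the latter by the unirationality of $\overline{\Sigma}_q(X)$ established in \cite{PTgeom}, since a finite form of positive degree on a unirational variety vanishes identically. The only differences are minor: you re-derive Griffiths' key lemma by hand (desingularization, the local computation at colliding branches, completed implicitly by a Hartogs extension) where the paper cites it as a black box, and you assert rationality of $\Sigma_q(X)$ where only unirationality is claimed and needed --- harmless, since unirationality already forces $H^0\big(\widehat{\Sigma},\Omega^r_{\widehat{\Sigma}}\big)=0$ on a smooth projective model.
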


Ce résultat permet de construire des relations abéliennes pour un tissu algé\-brique d'incidence  $\mathcal T_Z$ sur $\Sigma_q(X)$ défini à partir d'une   hypersurface $Z$  de $X$ (qu'on suppose lisse pour simplifier): du  point  2. de la proposition ci-dessus, on déduit 
une injection linéaire 
$ H^0(Z,K_Z) \hookrightarrow \mathfrak A(\mathcal T_Z ) $. \sk

C'est en utilisant ce fait que l'on  construit des tissus algébriques exceptionnels 
dans la Section \ref{S:Tissusalgebriquesexceptionnels}. 
 Si $X$ désigne l'une des variétés de la T{\small{ABLE}} 1, on trouve  une certaine  classe  explicite $E_X$  dans le groupe de Picard de $X$ telle que,  pour une courbe $C\in \Sigma_q(X)$ générale, on a: 
$$(C \cdot E_X)={(r+2)(n-1)+1}={d}_{exc}.$$

  On vérifie alors que  $Z\in  | E_X|$  générique est lisse et telle que $h^0(Z,K_Z)
 =\rho_{r,n}({d}_{exc})$. On en déduit que le tissu $\mathcal T_Z$ est exceptionnel. 
 \sk 
 
 Résumons tout ce qui vient d'être dit  au moyen de l'énoncé suivant qui est le résultat principal de cet article\!: 
 
\bt
\label{T:DescriptionUniforme}
Soit $X$ l'une des variétés de la {\rm T{\small{ABLE}} 1}. Il existe une classe  $E_X\in {\rm Pic}(X)$ telle que si $Z\in | E_X|$  est lisse (ce qui est génériquement vérifié), le triplet $(X,\Sigma_q(X),Z)$ est admissible et définit  un tissu  algébrique d'incidence  exceptionnel $\mathcal T_Z$ de type $(r,n)$ sur $\Sigma_q(X)$. 
\et

Nous construisons donc des tissus  algébriques exceptionnels  de type $(r,n)$ 
dans les cas suivants:  $n=3,4$ et $r\geq 2$;   $n=4,5,6$ et $r=2$.   

Cela montre que 
 la question de l'algébrisation des tissus de rang maximal dans le cas (\ref{E:condiNUM}) est plus riche que dans le cas général, et cela d'autant plus que les ingrédients utilisés pour construire ces exemples de tissus algébriques exceptionnels sont liés à un autre domaine des mathématiques (la théorie des algèbres non-associatives)  que personne  n'avait  suspecté auparavant  d'entretenir des liens avec la théorie  des tissus de rang maximal.  \medskip 

   Il découle du théorème d'Abel--Inverse que les relations abéliennes d'un tissu algébrique grassmannien  associé à une sous-variété  $V$ d'un espace projectif  correspondent aux formes différentielles abéliennes de $V$\footnote{Il s'agit des formes différentielles méromorphes de degré maximal $\omega$ sur $V\subset \mathbb P^N$ telles que le courant $[\omega]\wedge [V]$ est $\overline{\partial}$-fermé (voir \cite{Barlet} ou \cite{HenkinPassare} pour des précisions).}. Il est naturel de chercher à savoir si cela s'étend aux tissus algébriques d'incidence.  Plus précisément, soit  $\mathcal T_Z$  un tissu exceptionnel du  
   Théorème \ref{T:DescriptionUniforme}, associé à une hypersurface réduite $Z$ d'une variété $X$ de la T{\small{ABLE}} 1. 
   D'après \cite{PTweb}, on sait que les relations abéliennes de $\mathcal T_Z$ correspondent aux  
 annulations des traces par rapport à la famille de 1-cycles $\Sigma_q(X)$, de certaines formes rationnelles sur $Z$.  On se demande si ces formes ne sont pas en fait abéliennes. 
 
   Si nous ne savons pas  répondre à cette question dans le cas général,  certains r\'esultats de \cite{piriorusso} permettent d'y apporter 
la réponse  suivante dans le  cas  particulier mais intéressant $n=3$: 

\bt
Soit  $\mathcal T$ un $d_{exc}$-tissu  exceptionnel de type $(r,3)$. Si  
  $X_{\mathcal T}$ est lisse, c'est l'un des exemples de  la T{\small{ABLE}} 1, l'hypersurface $Z_{\mathcal T}$ appartient au système linéaire   $\lvert  E_{X_{\mathcal T}}\lvert $ 
   et   la trace par rapport à la famille de courbes cubiques $\Sigma_3(X_{\mathcal T})$    
   induit un isomorphisme linéaire entre l'espace des formes abéliennes de $Z_{\mathcal T}$ 
   et l'espace des relations abéliennes du modèle canonique $\mathcal T_{can}=\mathcal T_{Z_{\mathcal T}}$ de $\mathcal T$. 
\et
\sk

Pour finir cette introduction, indiquons comment est organisée la suite de l'article.  Dans la Section \ref{S:Xexceptionnelles}, nous décrivons les variétés exceptionnelles des  classes $\mathcal X_{r+1,n}(2n-3)$ qui sont connues. On décrit en détail les variétés  de ce type à partir desquelles nous construisons des tissus algébriques exceptionnels.  La courte section \ref{S:Abel} concerne des résultats du même type que le  Théorème d'addition  d'Abel 
pour les paires d'incidence $(X,\Sigma_{2n-3}(X))$ quand $X$ est une variété exceptionnelle.  Ils serviront à  construire des relations abéliennes des tissus considérés dans la Section \ref{S:Tissusalgebriquesexceptionnels}.  Cette dernière 
 est la section la plus importante de l'article. 
C'est précisément là que sont construits en détail les tissus algébriques exceptionnels évoqués plus haut.  Pour finir, dans la Section \ref{S:Final}, on commente les résultats obtenus et on indique ce qu'il reste à faire pour arriver à une classification complète des tissus algébriques exceptionnels. 
\bk

\hspace{-0.45cm}{\bf Remerciement.} L'auteur remercie chaleureusement Francesco Russo pour son aide sur plusieurs points de géométrie algébrique.


 \section{Variétés exceptionnelles}
\label{S:Xexceptionnelles}
Dans cette section, nous faisons le point sur les variétés non-standards des classes $\mathcal X_{r+1,n}(2n-3)$ qui sont connues. En particulier, nous décrivons en détail les variétés exceptionnelles 
à partir desquelles nous construirons des tissus exceptionnels dans la Section \ref{S:Tissusalgebriquesexceptionnels}. 

Les exemples   ci-dessous ont été 
considérés pour la première fois dans   \cite{PTgeom} et \cite{piriorusso}, articles auxquels le lecteur pourra se référer.
Nous les présentons ici  par souci de complétude.

\subsection{Algèbres de Jordan de rang 3  et  variétés exceptionnelles des classes $\mathcal{X}_{r+1,3}(3)$}
\label{S:jordancubicXJ}
Une construction de variétés exceptionnelles dans les classes   ${\mathcal X}_{r+1,3}(3)$  a été présentée dans \cite[\S 4.3]{piriorusso}.  Elle repose sur la notion d'{\bf algèbre de Jordan}. Par défini\-tion, une telle  algèbre est une algèbre complexe commutative et unitaire  $J$ (toujours supposée de dimension finie dans cet article) qui n'est pas associative mais  dont le produit vérifie l'\og {\it identité de Jordan}\fg
$$x^2(yx)=\big(x^2y\big)x.$$ 

Par exemple, une algèbre associative commutative et unitaire est  une algèbre de Jordan. Plus généralement, si $A$ est une algèbre associative unitaire mais non commutative,    le produit  symétrisé 
$$x\cdot y=\frac{1}{2}\big(xy+yx\big)$$
 induit sur  $A$ une structure d'algèbre de Jordan, notée $A^+$.\smallskip

 On peut montrer qu'une algèbre de Jordan $J$ est une algèbre à  puissance associative. 
 Par définition,  le {\bf rang}    
   d'un élément $x\in J$ est la dimension, comme sous-espace vectoriel complexe, de la sous-algèbre 
      engendrée par $x$. 
      On définit alors le {\bf rang de $J$} comme le maximum des rangs 
   de ses éléments.    C'est un entier fini,  plus petit que  $\dim(J)$. 
  \smallskip

 On suppose dorénavant que $J$ est une algèbre de Jordan de rang 3.  On note $e$ son unité.  Un résultat classique  du domaine (cf. \cite[Chap. VI]{jacobson} par exemple) nous assure qu'il existe des formes polynomiales homogènes $T,S$ et $N$ 
  de degrés respectifs 1, 2 et 3  telles que pour tout   $x\in J$, on a
 $$
 x^3-T(x)x^2+S(x)x -N(x)e=0. 
 $$
 Par définition, la forme linéaire $T$ et la forme cubique $N$ sont respectivement la {\bf trace générique} et  la {\bf norme générique} de  $J$. On définit alors l'{\bf adjoint} $x^\#$ d'un élément $x$ de  $J$  
en posant  
 $$x^\#=x^2-T(x)x+S(x)e.$$
 Pour tout $x\in J$, on a   $xx^\#=x^\#x=N(x)e$ ainsi que $(x^\#)^\#=N(x)x$.
\sk 

Dans ce qui suit, il sera pratique de noter $r+1$ la dimension de $J$ et de poser 
 $$Z_2(J)=\mathbb C\oplus J\oplus J \oplus \mathbb C.$$
 
Par définition, la {\bf courbe cubique sur $J$}, notée $X_J$
, est l'adhérence de Zariski de l'image de l'application affine
\begin{align}
\label{nu}
\nu=\nu_{J}: J & \longrightarrow \; \;  \mathbb PZ_2(J) \simeq \mathbb P^{2r+3} \\
x  & \longmapsto    \big[1: x: x^\#: N(x)   \big].\nonumber 
\end{align}

La réciproque de cette application peut être considérée comme une carte sur $X_J$. Celle dernière  en admet une autre,  tout aussi naturelle: la réciproque de 
\begin{align}
\label{nuhat}
\hat \nu=\hat {\nu}_{J}: J & \longrightarrow
\; \;  \mathbb PZ_2(J) \simeq \mathbb P^{2r+3}  \\
y  & \longmapsto    \big[N(y):y^\#: y:1   \big].   \nonumber 
\end{align} 

 La réunion $X_J^0={\rm Im}(\nu_{J}) \cup\,  {\rm Im}(\hat {\nu}_{J})$ est un ouvert dense et lisse de $X_J$. 
 L'adhéren\-ce de Zariski de  $\nu(\mathbb C e)$ est une courbe cubique gauche incluse dans $X_J^0$, notée $C_J$. Celle-ci passe par les points $0_J=\nu_J(0)$, $e_J=\nu_J(e)$ et $\infty_J=[0:0:0:1]$ de $X_J$.\sk 
 
 On sait d'autre  part que le groupe conforme ${\rm Conf}(J)$ de $J$ \footnote{
 Il s'agit du
  sous-groupe des transformations birationnelles de $J$ engendré par les translations $x\mapsto x+x_0$, pour  $x_0\in J$, l'inversion $x\mapsto x^{-1}$
et les  transformations linéaires $g\in GL(J)$ telles qu'il existe $g^\#\in GL(J)$ vérifiant $g(x)^\#=g^\#(x^\#)$ pour tout $x\in J$.} s'identifie à un sous-groupe  du groupe  des automorphismes projectifs de $X_J\subset \mathbb PZ_2(J)$ et qu'il agit transitivement sur les triplets génériques de points distincts  de $X_J$.  En considérant les images de $C_J$ par les éléments de ${\rm Conf}(J)$ on montre  que par trois  points généraux de $X_J$ passe une cubique gauche incluse dans $X_J$, 
cf.  \cite[Proposition 4.7]{piriorusso}.  En d'autres termes, on obtient que 
  $X_J$ est un élément de la classe $ \mathcal {X}_{r+1,3}(3)$. \medskip

L'orbite  de $0_J$ sous l'action de ${\rm Conf}(J)$ contient $X_J^0$, elle est donc ouverte  et  dense. De ce fait, on peut  voir $0_J$ comme un point générique  de $X_J$. En particulier, ce point est admissible au sens de \cite{PTgeom}.  Dans la carte affine associée à l'application $\nu_J$, la projection tangentielle $\tau=\tau_{0_J}: X_J\dashrightarrow \mathbb P^{r+1}$ de centre l'espace projectif tangent  
 à $X_J$ en $0_J$  se lit
 $$
 x \longmapsto \big[  x^\# : N(x) \big].
 $$

  Pour   $x\in J$ fixé,  l'application $ \mathbb P^1\rightarrow X_J $, $ [\lambda:\gamma]\mapsto 
 \nu_J(\lambda\, e+\gamma\,  x)$  est un paramétrage projectif  d'une courbe rationnelle $C_{x}$ incluse dans $ {X_J}$.  Si $N(x)\neq 0$, on vérifie que c'est une cubique gauche élément de  $\Sigma_3({X_J})$.    
 
 D'autre part,   pour tout $t\in \mathbb C$, on a 
 \begin{align*}
 \tau\big(\nu_J(e+tx)\big)
 =(e,1)+ \, t\big(T(x)e-x,T(x)\big) +  \, t^2\big(x^\#,S(x)   \big)+t^3\big(0,N(x)\big) .  
 \end{align*}
 
On en déduit immédiatement que $\tau(C_x)$ est une cubique gauche dans $\mathbb P^{r+1}$ dès que $x$ est de rang 3.  Cela démontre que 
  \begin{equation}
  \label{E:tau(C)} 
\begin{tabular}{l} {\it l'image  par une projection tangentielle générale  d'une courbe} \\ {\it générale de la 
 famille $\Sigma_3({X_J})$  est une cubique gauche. }
\end{tabular}
 \end{equation}

Considérons maintenant le cas d'une variété  standard $S\in \mathcal X_{r+1,3}(3)$.  
\`A équivalence projective près, $S$ est l'un des deux scrolls $S_{1\ldots122}$ ou $S_{1\ldots13}$, cf.  \cite[Théorème 5.3]{PTgeom}. Par conséquent,   pour  tout $s\in S$, l'espace projectif tangent 
$S_s(1)$  à $S$ en $s$  intersecte  $S$ le long d'un sous-espace projectif 
  de dimension $r$  de $\mathbb P^{2r+1}$, qu'on note $\Pi_s$.  On peut  alors 
 vérifier qu'une  cubique gauche générale  incluse dans $S$ intersecte  $\Pi_s$. Cela implique en particulier qu'une telle courbe  rencontre le centre de la projection tangentielle $S\dashrightarrow \mathbb P^{r+1}$ de centre $S_s(1)$. On en   déduit que   
    \begin{equation}
     \label{E:tau(C)S}
\begin{tabular}{l} {\it l'image  par une projection tangentielle générale  d'une courbe} \\ {\it  générale de la 
 famille  $ \Sigma_3(S)$  est une conique. }
\end{tabular}
 \end{equation}
 
En comparant (\ref{E:tau(C)})  et (\ref{E:tau(C)S}), on obtient immédiatement le résultat suivant: 
\begin{prop}
\label{P:XJnonStandard}
  Soit $J$ une algèbre de Jordan de rang 3. La 
  variété  $X_J$ est une variété exceptionnelle  de la classe $\mathcal {X}_{r+1,3}(3) $.
  \end{prop}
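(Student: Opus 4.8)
Le plan est de ramener la preuve \`a une seule assertion: \emph{$X_J$ n'est pas une vari\'et\'e standard}. En effet, l'appartenance de $X_J$ \`a la classe $\mathcal{X}_{r+1,3}(3)$ vient d'\^etre \'etablie, gr\^ace \`a la transitivit\'e de l'action de ${\rm Conf}(J)$ sur les triplets g\'en\'eriques de points de $X_J$ et \`a \cite[Proposition 4.7]{piriorusso}; et \^etre exceptionnelle signifie par d\'efinition appartenir \`a cette classe sans \^etre standard. Pour distinguer $X_J$ des vari\'et\'es standards, je m'appuierais sur l'invariant projectif suivant: le degr\'e de l'image d'une courbe g\'en\'erale de la famille $\Sigma_3$ par une projection tangentielle g\'en\'erale.

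Je combinerais alors les deux descriptions de cet invariant obtenues plus haut. D'une part, d'apr\`es \cite[Th\'eor\`eme 5.3]{PTgeom}, toute vari\'et\'e standard de $\mathcal{X}_{r+1,3}(3)$ est, \`a \'equivalence projective pr\`es, l'un des deux scrolls $S_{1\ldots122}$ ou $S_{1\ldots13}$; la relation (\ref{E:tau(C)S}) assure que pour une telle vari\'et\'e l'image consid\'er\'ee est une conique, donc de degr\'e $2$. D'autre part, la relation (\ref{E:tau(C)}) assure que pour $X_J$ cette m\^eme image est une cubique gauche, donc de degr\'e $3$. Comme le degr\'e de l'image d'une courbe g\'en\'erale par une projection tangentielle g\'en\'erale est pr\'eserv\'e par \'equivalence projective et que $2\neq 3$, la vari\'et\'e $X_J$ ne peut \^etre projectivement \'equivalente \`a aucun des scrolls standards. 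Elle n'est donc pas standard, autrement dit elle est exceptionnelle, ce qui ach\`eve la preuve.

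L'essentiel du contenu ne r\'eside pas dans cette comparaison finale, qui est imm\'ediate, mais dans les deux relations (\ref{E:tau(C)}) et (\ref{E:tau(C)S}) sur lesquelles elle repose. C\^ot\'e $X_J$, le point le plus d\'elicat est le calcul explicite du d\'eveloppement de $\big((e+tx)^\#,N(e+tx)\big)$ suivant les puissances de $t$, dont on d\'eduit que $\tau(C_x)$ demeure une cubique gauche d\`es que $x$ est de rang $3$; autrement dit, qu'une courbe g\'en\'erale de $\Sigma_3(X_J)$ \'evite le centre de la projection tangentielle g\'en\'erale. C\^ot\'e standard, la difficult\'e est de nature purement g\'eom\'etrique: v\'erifier qu'une cubique gauche g\'en\'erale incluse dans un scroll $S$ rencontre n\'ecessairement $\Pi_s=S_s(1)\cap S$, donc le centre de la projection tangentielle, ce qui abaisse le degr\'e de $3$ \`a $2$. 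Une fois ces deux faits acquis, l'opposition entre les degr\'es $2$ et $3$ tranche la question sans calcul suppl\'ementaire.
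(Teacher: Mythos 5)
Votre preuve est correcte et suit exactement la d\'emarche du papier : celui-ci \'etablit lui aussi l'appartenance de $X_J$ \`a la classe $\mathcal{X}_{r+1,3}(3)$ via l'action de ${\rm Conf}(J)$, puis conclut que $X_J$ n'est pas standard en comparant (\ref{E:tau(C)}) et (\ref{E:tau(C)S}), c'est-\`a-dire le degr\'e (cubique contre conique) de l'image d'une courbe g\'en\'erale de $\Sigma_3$ par une projection tangentielle g\'en\'erale, en s'appuyant sur la classification des vari\'et\'es standards comme scrolls de \cite[Th\'eor\`eme 5.3]{PTgeom}. Rien \`a redire.
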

  
   Au moyen des algèbres de Jordan de rang 3, on peut donc construire une vaste classe de variétés exceptionnelles dans le cas $n=3$. En fait, il découle  de \cite[Theorem 3.7]{PR-XJC} qu'il n'y a pas d'autres variétés    exceptionnelles en plus de celles-ci  dans les classes $\mathcal {X}_{r+1,3}(3) $. Nous n'utiliserons pas ce résultat dans la suite mais nous intéresserons plus particulièrement aux variétés exceptionnelles  $X_J$ lisses.  D'après \cite[Theorem 5.7]{piriorusso},  ce sont exactement les courbes cubiques  $X_J$ sur les algèbres de Jordan semi-simples de rang 3, 
    dont nous allons  rappeler la classification.


  \subsection{Algèbres de Jordan semi-simples de rang 3   et  variétés exceptionnelles lisses des classes $\mathcal X_{r+1,3}(3)$} 
  \label{S:jordancubicXJlisses}
  Tout d'abord, rappelons qu'une  algèbre de Jordan est dite {\bf simple} (resp. {\bf semi-simple}) si elle ne possède pas d'idéal  (resp. pas d'idéal résoluble)  non-trivial. D'après un résultat classique de la théorie  dû à Albert (voir par exemple \cite[Chap.\,V]{jacobson}), 
 une algèbre de Jordan $J$ est semi-simple si et seulement si elle est isomorphe à un produit direct d'algèbres de Jordan simples. 
Les algèbres de Jordan simples complexes étants classifiées  ({cf.} \cite[Chap.\,V \S7]{jacobson}), on obtient  facilement la classifications des algèbres de Jordan $J$ semi-simples de rang 3. 

\subsubsection{\bf Les quatre cas simples} 
  \label{S:Les quatres cas simples}
Soit $\mathbb {R}, \mathbb{C}, \mathbb{H}$ et $\mathbb{O}$, 
les quatre algèbres ``de Hurwitz'':  $\mathbb {R}$ et $ \mathbb{C}$ sont les corps des nombres réels et   complexes respectivement, $\mathbb H$ est l'algèbre des quaternions de Hamilton et $\mathbb O$ celle des octaves de Graves et Cayley.  

Si $
\mathbb{A}$ désigne l'une  de ces quatre algèbres, on note 
${\boldsymbol{{A}}}=\mathbb A\otimes_{\mathbb R} \mathbb C$ 
sa complexification.  On obtient ainsi une algèbre complexe munie d'une involution $\mathbb C$-linéaire $x\mapsto \overline{x}$ qui est un anti-isomorphisme ({\it i.e.}  $\overline{xy}=\overline{y}\, \overline{x}$ pour tout $x,y\in \boldsymbol{A}$) et  telle que $\lvert \lvert x \lvert \lvert^2=x\overline{x}$ soit scalaire ({\it i.e.} appartienne à la droite vectorielle $\mathbb C  \boldsymbol{1}$) quel que soit $x\in \boldsymbol{A}$.  De plus, $ \lvert \lvert \cdot  \lvert \lvert^2$ est une forme quadratique complexe non-dégénérée 
 sur ${\boldsymbol{{A}}}$. 
\smallskip

En tant qu'algèbres complexes, $\boldsymbol{R}, \boldsymbol{C}, \boldsymbol{H}$ et $\boldsymbol{O}$ sont respectivement de dimension $1,2,4$ et $8$  et il y a  des isomorphismes classiques
\begin{equation}
\label{E:A-Isom-Classique}
\boldsymbol{R}\simeq \mathbb C\, ,\qquad  \boldsymbol{C}\simeq \mathbb C\times \mathbb C\qquad \mbox{et}\qquad  \boldsymbol{H}\simeq M_2(\mathbb C).
\end{equation}

Pour $\boldsymbol{A}$ comme ci-dessus, on note ${\rm Herm}_3(\boldsymbol{A})$ l'ensemble des matrices carrées $3\times3$ à coefficients dans $\boldsymbol{A}$, qui sont hermitiennes: 
$$
{\rm Herm}_3(\boldsymbol{A})=\left\{ 
\begin{pmatrix}
\alpha_1 & a_1 & a_2\\
\overline{a_1} & \alpha_2 &  a_3\\
\overline{a_2}& \overline{a_3}   &   \alpha_3
\end{pmatrix} \; \Bigg\lvert  
\begin{tabular}{l} 
$\alpha_1,\alpha_2,\alpha_3\in \mathbb C$\vspace{0.1cm}\\ 
$a_1,a_2,a_3\in \boldsymbol{A}$
\end{tabular}
\right\}. 
$$

On vérifie que le produit $ M\cdot N=\frac{1}{2}\big( MN+NM \big)$
est bien à valeurs dans ${\rm Herm}_3(\boldsymbol{A})$ et on montre qu'il fait de 
${\rm Herm}_3(\boldsymbol{A})$ 
une algèbre de Jordan\footnote{Ce fait est facile à établir lorsque $\boldsymbol{A}$ est associative, mais est non-trivial pour   $\boldsymbol{O}$, {cf.} \cite[p.\,21]{jacobson}.} 
 de rang 3 qui est simple. De plus, on peut montrer que, à isomorphismes près, 
les algèbres ${\rm Herm}_3(\boldsymbol{A})$, 
$\boldsymbol{A}= \boldsymbol{R} , \boldsymbol{C} ,\boldsymbol{H}, \boldsymbol{O}  $ forment la liste complète des algèbres de Jordan simples et de rang 3, cf. 
\cite[p. 210]{jacobson}.
\sk 

On rappelle que    
    (\ref{E:A-Isom-Classique}) induit les isomorphismes 
suivants d'algèbres de Jordan 
${\rm Herm}_3(\boldsymbol{R})\simeq {\rm Sym}_3(\mathbb C)^{+}$, $
{\rm Herm}_3(\boldsymbol{C})\simeq  {\rm M}_3(\mathbb C)^{+}$ et 
${\rm Herm}_3(\boldsymbol{H})\simeq {\rm Alt}_6(\mathbb C)^{+}$, 
où ${\rm M}_3(\mathbb C)$ est l'algèbre des matrices complexes $3\times 3$,  $ {\rm Sym}_3(\mathbb C)$ sa sous-algèbre formée des matrices symétriques, tandis que ${\rm Alt}_6(\mathbb C)$ désigne l'ensemble des matrices complexes $6\times 6$ antisymétriques.
\sk

Dans les quatre paragraphes qui suivent, nous  décrivons aussi explicitement que possible les courbes cubiques sur les algèbres de Jordan simples ${\rm Herm}_3(\boldsymbol{A})$ pour  
$\boldsymbol{A}=  \boldsymbol{R} , \boldsymbol{C} ,\boldsymbol{H}, \boldsymbol{O}  $. Pour des preuves ou  davantage  de précisions, nous renvoyons le lecteur aux articles \cite{Freudenthal} et \cite{LandsbergManivel}. 

\paragraph{La courbe cubique sur  ${\rm Herm}_3(\boldsymbol{R})$} 
On munit l'espace vectoriel $\mathbb C^6$ de la forme symplectique standard 
$\omega= \sum_{i=1}^3dx_i\wedge dx_{i+3}$. Un sous-espace vectoriel $V\subset \mathbb C^6$ de dimension 3 est {\bf lagrangien} si la restriction de $\omega$ à celui-ci est indentiquement nulle.  L'ensemble de ces sous-espaces forme la {\bf variété grassmannienne lagrangienne}
$LG_3(\mathbb C^6)$. C'est une sous-variété de dimension 6 de 
  $G_3(\mathbb C^6)$, qui  est homogène sous l'action du  groupe symplectique complexe ${Sp}_6(\mathbb C)$ et donc est lisse.  Par restriction, le plongement de Pl\"ucker
de $G_{3}(\mathbb C^6)$ dans $\mathbb P^{19}$ induit un plongement de  $LG_3(\mathbb C^6)$ dans $\mathbb P^{13}$.  En considérant dorénavant 
$LG_3(\mathbb C^6)$ comme une sous-variété de $\mathbb P^{13}$ et modulo équivalence projective, on a: 
$$
X_{{\rm Herm}_3(\boldsymbol{R})}=LG_3(\mathbb C^6)\subset \mathbb P^{13}.
$$

 \paragraph{La courbe cubique sur ${\rm Herm}_3(\boldsymbol{C})$}
 Dans ce cas, la courbe cubique sur ${\rm Herm}_3(\boldsymbol{C})$ est la variété grassmannienne $G_{3}(\mathbb C^6)$ des sous-espaces de dimension 3 de $\mathbb C^6$, vue comme une sous-variété de $\mathbb P^{19}$ au moyen du plongement de Pl\"ucker: 
 $$
X_{{\rm Herm}_3(\boldsymbol{C})}=G_3(\mathbb C^6)\subset \mathbb P^{19}. 
 $$
\paragraph{La courbe cubique sur ${\rm Herm}_3(\boldsymbol{H})$}
Soit $\theta=\sum_{i=1}^{12}(dx_i)^2$ la forme euclidienne complexe standard sur $\mathbb C^{12}$.  Un sous-espace  de $\mathbb C^{12}$ est {\bf isotrope} si la restriction de $\theta$ à celui-ci est identiquement nulle. Par définition, la {\bf grassmanienne orthogonale} $OG_6(\mathbb C^{12})$ est l'ensemble des sous-espaces isotropes de dimension 6 de $\mathbb C^{12}$.  C'est une sous-variété de dimension 15 de $G_{6}(\mathbb C^{12})$ qui est homogène sous l'action de ${SO}_{12}(\mathbb C)$ et donc est lisse. Par restriction du  plongement de Pl\"ucker $
G_{6}(\mathbb C^{12})\hookrightarrow \mathbb P(\wedge^6 \mathbb C^{12})$, on obtient  un plongement de $OG_6(\mathbb C^{12})$ dans $\mathbb P^{31}$ et on a: 
 $$
X_{{\rm Herm}_3(\boldsymbol{H})}=OG_{6}(\mathbb C^{12})\subset \mathbb P^{31}. 
 $$

\paragraph{La courbe cubique sur ${\rm Herm}_3(\boldsymbol{O})$}
Dans \cite{Freudenthal}, 
Freudenthal a démontré  que $X_{{\rm Herm}_3(\boldsymbol{H})}\subset \mathbb P^{55}$ est homogène sous l'action du groupe conforme 
  de ${\rm Herm}_3(\boldsymbol{O})$ qui, dans ce cas, se révèle être 
 le groupe de Lie exceptionnel $E_7$.  Le sous-groupe $P_7$ qui stabilise un point est un certain sous-groupe parabolique maximal de $ E_7$.  
 On peut donc écrire
 $$
X_{{\rm Herm}_3(\boldsymbol{O})}=E_7/P_7\subset \mathbb P^{55}. 
 $$
\subsubsection{\bf Le cas semi-simple non simple} Soit $r$ un entier plus grand ou égal à 2. Si $q\in{\rm Sym}^2(V^*)$ est une forme quadratique sur 
 un espace vectoriel $V$ de dimension $r-1$, on définit explicitement  un produit bilinéaire symétrique sur la somme directe $\mathbb C\oplus V$ en posant 
  $$
  (\lambda,v)\cdot \big(\lambda',v'\big)=\big(\lambda\lambda'-q(v,v'),\lambda \, v'+\lambda'v\big) 
  $$
   pour $\lambda,\lambda'\in \mathbb C$ et $v,v'\in V$, avec  $q(v,v')=\frac{1}{2}(q(v+v')-q(v)-q(v'))$.    
   
   On vérifie que ce produit définit une structure d'algèbre de Jordan de rang 2 sur $
 \mathbb C\oplus V$, d'unité $e=(1,0)$  et dont la classe d'isomorphisme est donnée (en plus de la dimension) par le rang de la forme quadratique $q$.  On la notera  $\mathcal J^V_q$ ou plus simplement   $\mathcal J^{r}_q$ si $V=\mathbb C^{r-1}$. \sk 
 
 On vérifie que $\mathcal J^{r}_q$ est semi-simple (en fait simple si $r>3$) si et seulement si $q$ est non-dégénérée. Dans ce cas, le  produit direct $J^{r+1}_q=\mathbb C\times \mathcal J^r_q$ est   semi-simple de rang 3. L'adjoint et la norme d'un élement $x=(\alpha,(\lambda,v)) \in \mathbb C\times \mathcal J_q^r$ sont donnés par les formules suivantes: 
 $$
 x^\#=\big(\lambda^2+q(v), (\alpha\lambda, -\alpha v)\big)\qquad \mbox{et}\qquad N(x )=\alpha \big(\lambda^2+q(v)  \big).
 $$
 
 Il est alors facile d'expliciter la paramétrisation affine (\ref{nu}) et de vérifier que la courbe cubique sur $J^{r+1}_q$ est projectivement équivalente  au plongement de Segre du produit de $\mathbb P^1$ avec une hypersurface quadrique lisse $Q\subset \mathbb P^{r+1}$:  on a 
 $$
X_{J^{r+1}_q}=
{\rm Seg}\big( \mathbb P^1\times Q   \big)
\subset \mathbb P^{2r+3}. 
 $$
 
 On remarquera qu'il n'est pas nécessaire de supposer  $q$ non-dégénérée pour obtenir que $X_{J^{r}_q}$ est dans la classe $\mathcal X_{r+1,3}(3)$. Dans la Section 6.2 de \cite{PTgeom}, on a  montré  que 
 pour toute  hyperquadrique $Q$, le plongement de Segre 
 ${\rm Seg}( \mathbb P^1\times Q)$   est un élément non-standard de  la classe $\mathcal X_{r+1,3}(3)$. 
 
 Puisqu'on ne considère ici  que les variétés exceptionnelles  lisses et vu que 
la lissité de la courbe cubique sur l'algèbre  ${J^{r+1}_q}$ équivaut à la  semi-simplicité de celle-ci, on se restreint dans cet article au cas où la forme quadratique $q$  considérée  est non-dégénérée.


\subsection{Une famille de variétés exceptionnelles dans le cas $n=4$}
 \label{S:3H-2Q}
Nous présentons à nouveau  la famille de variétés non-standards lisses des classes $\mathcal {X}_{r+1,4}(5)$ décrite dans la Section 6.2 de  \cite{PTgeom}.\sk

On suppose ici que $r$ est un entier strictement plus grand que 1 et l'on désigne par $Q$ une variété  quadrique lisse de dimension $r-2$ dans $\mathbb P^{r+1}$.  On choisit un système de coordonnées homogènes $u_0,u_1,s_1,\ldots,s_{r}$ sur $\mathbb P^{r+1}$ tel que $Q$ soit découpée par le système d'équations
$u_0=u_{1}=0$ et $q(s)=0$,  où $q$ est une forme quadratique non-dégénérée en les $s_i$.   Soit $\Pi=\langle Q\rangle$ le sous-espace  projectif engendré par $Q$: c'est un sous-espace projectif de codimension 2 de $\mathbb P^{r+1}$.
\sk 

Soit $H$ la classe d'un hyperplan dans le groupe de Picard de $\mathbb P^{r+1}$.  
On note 
$  |   3H-2Q |  $ le système linéaire   formé des hypersurfaces cubiques de $\mathbb P^{r+1}$ avec  des points doubles le long de $Q$.  C'est le  projectifié du sous-espace vectoriel de $H^0(\mathbb P^{r+1}, \mathcal O_{\mathbb P^{r+1}}(3))$   
 engendré par  la famille 
\begin{equation}
\label{E:l3H-2Ql}
\Big\{ u_{\epsilon_1}u_{\epsilon_2} u_{\epsilon_3}\, , \, u_{\epsilon_1}u_{\epsilon_2}s_j
\, , \, u_{\epsilon_1}q(s)
 \Big\}_{
\substack{
\epsilon_i\in \{0,1\}  \\  j=1,\ldots,r
}
}\subset  H^0\big(\mathbb P^{r+1}, \mathcal O_{\mathbb P^{r+1}}(3)\big).
\end{equation}

On en déduit que   $  |   3H-2Q |  $ 
est  de dimension $3r+5$ et qu'il induit une application rationnelle 
$$\varphi=\varphi_{ |   3H-2Q|}: \mathbb P^{r+1}\dashrightarrow \mathbb P^{3r+5}, $$ 
dont l'image est une sous-variété non-dégénérée de $\mathbb P^{3r+5}$ de dimension $r+1$, notée $X_Q$.
\sk 

Si $p_1,\ldots,p_4$ sont quatre points génériques de $\mathbb P^{r+1}$, alors 
$P=\langle p_1,\ldots,p_4 \rangle$ est un 3-plan qui  intersecte $\Pi$ proprement le long d'une droite. Par généricité,   cette dernière  intersecte transversalement la quadrique $Q$ en deux points distincts $a_P$ et $b_P$ qui, avec les $p_i$ initiaux,  forment une collection de six points en position générale dans le 3-plan projectif $P$. Il existe donc une  courbe  cubique gauche $C_P\subset P$ qui passe par ces six points.  On vérifie que $\varphi(C_P)$ est une  quintique rationnelle. Les $p_i$ ayant été supposés génériques, cela montre, combiné avec le Corollaire 2.2 de \cite{PTgeom},  que $X_Q$ appartient bien à  la classe $ \mathcal X_{r+1,4}(5)$. \sk

D'après \cite[Théorème 5.3]{PTgeom},  pour une dimension $r+1$ fixée, il n'y a que deux variétés standards dans la classe 
$\mathcal X_{r+1,4}(5)$: les deux scrolls rationnels normaux  $S_{2\ldots24}$ et $S_{2\ldots233}$.  
D'autre part, on vérifie (cf. \cite[\!p. \!47]{PTgeom}) que 
l'intersection de $X_Q$ avec son espace projectif tangent    
 en l'un de ses points génériques  est une quadrique de dimension $r-1$. En particulier, cette intersection ne contient pas de sous-espace projectif de dimension $r$. Cela implique que  $X_Q$ n'est pas un scroll 
et donc qu'elle n'est pas standard. On a démontré la 
\begin{prop} Soit $r\geq 2$. La variété $X_Q\subset \mathbb P^{3r+5}$ image de $\mathbb P^{r+1}$ par l'application induite par le  système linéaire $|   3H-2Q |  $  
    est une variété exceptionnelle de la classe ${\mathcal X}_{r+1,4}(5)$.
\end{prop}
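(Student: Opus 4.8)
Le plan est de s\'eparer l'\'enonc\'e en deux assertions: que $X_Q$ appartient \`a la classe $\mathcal X_{r+1,4}(5)$, puis qu'elle n'y est pas standard. La premi\`ere est de nature incidentielle, la seconde de nature diff\'erentielle-projective.

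Pour l'appartenance \`a la classe, je partirais de quatre points g\'en\'eriques $p_1,\ldots,p_4$ de $\mathbb P^{r+1}$ et je construirais la cubique gauche qui les relie dans $X_Q$. Le $3$-plan $P=\langle p_1,\ldots,p_4\rangle$ coupe le sous-espace $\Pi=\langle Q\rangle$, de codimension $2$, le long d'une droite (le calcul de dimension $3+(r-1)-(r+1)=1$ \'etant g\'en\'eriquement propre), laquelle rencontre la quadrique lisse $Q$ en deux points distincts $a_P,b_P$. Les six points $p_1,\ldots,p_4,a_P,b_P$ \'etant en position g\'en\'erale dans $P\simeq\mathbb P^3$, il passe par eux une cubique gauche $C_P$. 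L'\'etape cl\'e est alors le calcul du degr\'e de $\varphi(C_P)$: les \'el\'ements de $|3H-2Q|$ restreints \`a $C_P$ sont de degr\'e $3\times 3=9$ et s'annulent \`a l'ordre $2$ en chacun des points $a_P,b_P$ o\`u $C_P$ rencontre le lieu double $Q$, de sorte que la partie mobile est de degr\'e $9-2-2=5$. Il resterait \`a contr\^oler que $\varphi(C_P)$ est une quintique rationnelle \emph{normale}, c'est-\`a-dire lisse et non d\'eg\'en\'er\'ee dans son $\mathbb P^5$; la g\'en\'ericit\'e des $p_i$ jointe au Corollaire 2.2 de \cite{PTgeom} permettrait alors de conclure que $X_Q\in\mathcal X_{r+1,4}(5)$.

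Pour la non-standardit\'e, j'invoquerais le Th\'eor\`eme 5.3 de \cite{PTgeom}, d'apr\`es lequel les seules vari\'et\'es standards de dimension $r+1$ dans $\mathcal X_{r+1,4}(5)$ sont les deux scrolls rationnels normaux $S_{2\ldots24}$ et $S_{2\ldots233}$. Il suffit donc de montrer que $X_Q$ n'est pas un scroll. J'utiliserais le crit\`ere suivant: en un point $x$ d'un scroll, la r\'eglure fournit un $\mathbb P^r$ contenu dans la vari\'et\'e et passant par $x$, donc contenu dans l'espace projectif tangent, si bien que l'intersection de la vari\'et\'e avec cet espace tangent contient un $\mathbb P^r$. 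Je calculerais alors l'intersection de $X_Q$ avec son espace projectif tangent en un point g\'en\'eral: d'apr\`es l'analyse locale de \cite[p.\,47]{PTgeom}, c'est une hyperquadrique de dimension $r-1$. Comme $r-1<r$, une vari\'et\'e de cette dimension ne peut contenir aucun $\mathbb P^r$; donc $X_Q$ n'est pas un scroll, et par cons\'equent n'est pas standard.

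La difficult\'e principale me semble r\'esider dans cette derni\`ere v\'erification: d\'eterminer l'intersection tangentielle de $X_Q$ en un point g\'en\'eral demande une analyse locale explicite de $\varphi$ (typiquement via sa seconde forme fondamentale) dans les coordonn\'ees $u_0,u_1,s_1,\ldots,s_r$ et la base (\ref{E:l3H-2Ql}) du syst\`eme lin\'eaire, et c'est elle qui encode le fait que le lieu double $Q$ contribue \`a l'intersection sous la forme d'une hyperquadrique de dimension $r-1$ plut\^ot que d'un espace lin\'eaire de dimension $r$. Le contr\^ole de la normalit\'e de la quintique $\varphi(C_P)$ est de m\^eme nature mais plus routinier, le comptage de degr\'e ci-dessus en fournissant l'ingr\'edient essentiel.
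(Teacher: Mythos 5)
Votre proposition suit essentiellement la même démarche que l'article : même construction de la cubique gauche $C_P$ par les six points $p_1,\ldots,p_4,a_P,b_P$ combinée au Corollaire 2.2 de \cite{PTgeom} pour l'appartenance à $\mathcal X_{r+1,4}(5)$, puis même recours au Théorème 5.3 de \cite{PTgeom} et au fait que l'intersection tangentielle générale est une hyperquadrique de dimension $r-1$ (donc ne contient pas de $\mathbb P^r$) pour exclure que $X_Q$ soit un scroll. Vos ajouts (le comptage de degré $9-2-2=5$ et l'explicitation du critère de scroll) ne font que détailler des vérifications que l'article laisse au lecteur.
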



\subsection{La variété de Veronese $v_3(\mathbb P^3)$ et ses projections tangentielles}
Par six  points en position générale de $\mathbb P^3$ passe une cubique gauche. On en déduit que $v_3(\mathbb P^3)\subset \mathbb P^{19}$ appartient à la  classe $\mathcal X_{3,6}(9)$. De plus,  $v_3(\mathbb P^3)$  n'est pas standard  en tant qu'élément de cette classe. 
 En effet, il découle du Théorème 5.3 de \cite{PTgeom} que les deux scrolls $S_{455}$ et $S_{446}$ sont  exactement les exemples standards dans ce cas (à équivalence projective près) et il est clair que $v_3(\mathbb P^3)$ n'est pas un scroll.
 \sk

Donnons nous maintenant   trois points distincts  $x, y$ et $z$ de $v_3(\mathbb P^3)$, supposés ``non-alignés'', c'est-à-dire non situés sur l'image par  $v_3$ d'une droite de $\mathbb P^3$. On vérifie que le 3-uplet $(x,y,z)$ est admissible.  
Si l'on note  respectivement  $\tau_x$,  $\tau_{xy}$ et $\tau_{xyz}$ les restrictions à $v_3(\mathbb P^3)$ des projections tangentielles en $x$, $x$ et $y$ et  $x, y$ et $z$, on vérifie que
 les variétés 
$$\tau_x\big(v_3(\mathbb P^3)\big)
\subset \mathbb P^{15} \, ,
\,  \qquad 
  \tau_{xy}\big(v_3(\mathbb P^3)\big)\subset \mathbb P^{11}  \qquad \mbox{et}\qquad
 \tau_{xyz}\big(v_3(\mathbb P^3)\big)
 \subset \mathbb P^7
 $$
sont  des  exemples  de variétés non-standards des classes $\mathcal X_{3,5}(7)$, $\mathcal X_{3,4}(5)$  et $ \mathcal X_{3,3}(3)$  respectivement.  On notera cependant que les deux derniers exemples  ainsi obtenus  ne sont  pas nouveaux:   $ \tau_{xy}(v_3(\mathbb P^3))$ coïncide avec la variété $X_Q$ construite dans la section précédente quand  $r=2$, et la triple projection tangentielle 
$\tau_{xyz}(v_3(\mathbb P^3))$ est projectivement équivalente au plongement de Segre de    $\mathbb P^1\times \mathbb P^1\times \mathbb P^1$ dans $ \mathbb P^7$, qui n'est rien d'autre que la courbe cubique $X_{J_q^3}$.

 \section{Un théorème d'addition d'Abel pour les paires \\ d'incidence  $(X,\Sigma_q(X))$ lorsque $X\in \mathcal X_{r+1,n}(q)$}
\label{S:Abel}

C'est via le Théorème d'addition d'Abel qu'on obtient des relations abéliennes des tissus algébriques grassmanniens à partir des formes holomorphes (ou plus généralement des formes abéliennes) sur les variétés projectives associées.  Dans cette section, on spécialise des résultats de Griffiths 
\cite{Griffiths} en 
généralisant cela aux tissus algébriques d'incidence associés aux  hypersurfaces des variétés des classes $\mathcal X_{r+1,n}(q)$.

\subsection{Variantes de Griffiths du théorème d'Abel}
\label{S:Abel-Griffiths}

Soit  $V$ une variété projective de dimension pure  $n$ que l'on suppose réduite mais pas forcément lisse ou irréductible.

Soit $\Omega$ une $k$-forme rationnelle sur $V$. 
 Elle est dite {\bf finie}    
  si pour une (ou de façon équivalente, pour toute) désingularisation $\mu:V'\rightarrow V$, la tiré-en-arrière  par $\mu$ de la restriction de $\Omega$ à $V_{\rm reg}$
 est holomorphe et  se prolonge en une forme holomorphe sur $V'$ tout entier, cf. \cite{Griffiths,HenkinPassare}.   Lorsque $k=\dim(V)$, ces formes peuvent également être décrites comme les sections globales  de l'image directe par $\mu$ du faisceau canonique de $V'$.    
 On note  $\tilde \Omega^{n}_V=\mu_\star(K_{V'})$. 
 
 Il découle immédiatement de leur définition que l'image inverse par un morphisme  d'une  forme  finie est encore finie; que 
 dans le cas où $V$ est lisse, ces formes ne sont rien d'autre que les $k$-formes holomorphes globales sur $V$. 
 On en déduit immédiatement qu'une $k$-forme finie sur une variété unirationnelle est forcément identiquement nulle si $k>0$.

 \medskip
Rappelons les versions très générales du Théorème d'addition d'Abel données par Griffiths dans 
\cite{Griffiths}.  Soit $F:V\dashrightarrow W$ une application rationnelle dominante génériquement finie. 
 Pour  $w\in W$ générique, le germe de $F$ en $v$, noté $F_v$, est étale en tout point    $v$ de la fibre $F^{-1}(w)$.  
Celle-ci étant finie,   pour une $k$-forme rationnelle $\Omega$ comme ci-dessus, la somme 
$\sum_{v\in F^{-1}(w)} (F_v)_*(\Omega)$ définit un germe de forme méromorphe sur $W$ en $w$.  Tous les germes obtenus ainsi se recollent pour former une $k$-forme méromorphe, a priori seulement définie sur un ouvert de Zariski de $W$.  Cette forme, 
est appelée  la  {\bf trace de $\Omega$ par $F$}.
\bt[\cite{Griffiths}]
\label{T:AbelGriffiths}
\begin{enumerate}
\item  La trace de $\Omega$ par $F$ 
 se prolonge  sur $W$ tout entier en une forme rationnelle,  notée ${\rm Tr}_F(\Omega)$.
\item Si $\Omega$ est finie, alors ${\rm Tr}_F(\Omega)$   l'est également.  
En particulier,  celle-ci  est identiquement nulle  si $W$ est unirationnelle.
\end{enumerate}
\et


\subsection{Applications à la construction de relations abéliennes} 
Le but ici est d'expliquer comment le théorème d'Abel de Griffiths rappelé dans la section précédente  s'utilise pour construire des relations abéliennes des tissus algébrique d'incidence construits à partir d'une paire $(X,\Sigma_q(X))$ lorsque  $X$ est un élément d'une des classes $ \mathcal X_{r+1,n}(q)$. \mk

Tout d'abord, il nous semble intéressant d'expliciter la condition intuitivement naturelle sous laquelle il est possible d'associer un tissu de type $(r,n)$ à une hypersurface $Z$ de $X$. 
   \begin{prop}
\label{P:XVSigmaAdm}
Le triplet $(X,\Sigma_q(X),Z)$ est admissible si et seulement si $Z$ est réduite et rencontre l'ouvert 
 des points admissibles de $X$.
\end{prop}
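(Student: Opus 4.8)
\emph{Esquisse de preuve.} L'outil central sera la vari\'et\'e d'incidence
\[
I_Z=\big\{\,(\sigma,z)\in \Sigma_q(X)\times Z \;\big\lvert\; z\in C_\sigma\,\big\},
\]
munie de ses deux projections $\pi\colon I_Z\to \Sigma_q(X)$ et $\rho\colon I_Z\to Z$. Comme $\dim C_\sigma=1$, $\dim Z=r$ et $\dim X=r+1$, le cycle $C_\sigma\cap Z$ est de dimension attendue $0$. On commencerait donc par montrer que, pour $\sigma$ g\'en\'erique, cette intersection est propre et r\'eduite, de sorte que $\pi$ soit g\'en\'eriquement finie et que, au voisinage d'un point g\'en\'erique $\sigma_0$, la vari\'et\'e $I_Z$ se scinde en $d$ feuillets \'etales au-dessus de $\Sigma_q(X)$. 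Chaque feuillet fournit une section locale $s_j$ de $\pi$, et l'on poserait $u_j=\rho\circ s_j\colon (\Sigma_q(X),\sigma_0)\to Z$; les feuilletages candidats $\mathcal F_j$ sont alors les fibres des $u_j$. Tout revient \`a montrer que les $\mathcal F_j$ forment un tissu de type $(r,n)$ si et seulement si $Z$ est r\'eduite et rencontre $X_{\rm adm}$.

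Pour la suffisance, on supposerait $Z$ r\'eduite avec $Z\cap X_{\rm adm}\neq\emptyset$. Comme $X_{\rm adm}$ est un ouvert de Zariski dense de $X$, l'intersection $Z\cap X_{\rm adm}$ est un ouvert dense de $Z$. Un argument de transversalit\'e g\'en\'erique (de type Bertini, appliqu\'e \`a la famille $\Sigma_q(X)$ qui recouvre $X$) donnerait alors que, pour $\sigma$ g\'en\'erique, la courbe $C_\sigma$ coupe $Z$ transversalement en $d$ points distincts, tous situ\'es dans $X_{\rm adm}$. En un tel point admissible, la d\'efinition m\^eme de l'admissibilit\'e au sens de \cite{PTgeom} assure que l'\'evaluation des courbes de la famille est submersive; on en d\'eduirait que chaque $u_j$ est une submersion, donc que $\mathcal F_j$ est un feuilletage de codimension $r$. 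La position g\'en\'erale des $d$ feuilletages --- \`a savoir que les syst\`emes conormaux de $n$ quelconques d'entre eux engendrent $T^*_{\sigma}\Sigma_q(X)$ --- se ram\`enerait \`a la position g\'en\'erale des $n$ points d'incidence correspondants, garantie par l'admissibilit\'e : on obtiendrait ainsi un tissu de type $(r,n)$.

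Pour la n\'ecessit\'e, on raisonnerait par contrapos\'ee. Si $Z$ n'est pas r\'eduite le long de l'une de ses composantes, les fibres g\'en\'eriques de $\pi$ sont non r\'eduites et le nombre de feuilletages distincts obtenus tombe strictement en dessous de $d$; l'incidence ne peut alors induire un $d$-tissu. De m\^eme, si $Z\subset X\setminus X_{\rm adm}$, alors pour $\sigma$ g\'en\'erique tous les points de $C_\sigma\cap Z$ sont non admissibles, et en ces points l'\'evaluation des courbes cesse d'\^etre submersive : les $u_j$ ne sont plus des submersions et les $\mathcal F_j$ ne sont plus des feuilletages de codimension $r$. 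Dans les deux cas, l'incidence n'induit pas de tissu de type $(r,n)$, ce qui \'etablit la n\'ecessit\'e des deux conditions.

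La principale difficult\'e r\'esidera dans le contr\^ole de la position g\'en\'erale des feuilletages $\mathcal F_j$, c'est-\`a-dire dans la traduction exacte de l'\'enonc\'e \og le point d'incidence est admissible au sens de \cite{PTgeom}\fg\, en l'\'enonc\'e \og les syst\`emes conormaux de $n$ quelconques des $\mathcal F_j$ sont en position g\'en\'erale en $\sigma$\fg. Cela demanderait un calcul local identifiant, en chaque point d'intersection $z_j=u_j(\sigma)$, la conormale de $\mathcal F_j$ \`a la donn\'ee tangentielle de $C_\sigma$ et de $Z$ en $z_j$, puis de v\'erifier que la condition de position g\'en\'erale d'un tissu de type $(r,n)$ \'equivaut \`a la condition d'admissibilit\'e d\'efinissant $X_{\rm adm}$. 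C'est ce point, davantage que les arguments de g\'en\'ericit\'e qui l'entourent, qui constitue le centre de la proposition.
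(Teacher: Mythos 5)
Your sketch sets up the right objects (the incidence variety, the local maps $u_j$), but the step you dispatch with \og un argument de transversalit\'e g\'en\'erique (de type Bertini), appliqu\'e \`a la famille $\Sigma_q(X)$ qui recouvre $X$\fg\, is a genuine gap, and it is exactly where the proof has to do its work. Bertini--Kleiman transversality requires a linear system or a transitive group action on $X$; neither is available here, since $\Sigma_q(X)$ is an arbitrary family of rational curves and $X$ need not be homogeneous. Covering $X$ is not enough: the lines of $\mathbb P^n$ through a fixed point $p$ form a family covering $\mathbb P^n$, yet every member passes through $p$, so if $Z$ were singular at $p$ no generic member would cut $Z$ transversally inside $Z_{\rm reg}$. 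Thus nothing in your argument prevents all curves of $\Sigma_q(X)$ from meeting $Z_{\rm sing}$, or from being tangent to $Z$ along some fixed locus.

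The paper closes this gap with deformation theory of rational curves rather than Bertini: by \cite{PTgeom}, $X$ is smooth along an admissible curve $\sigma$ and $N_{\sigma/X}=\mathcal O_{\sigma}(n-1)^{\oplus r}$, which is very ample since $n\geq 2$; hence $\sigma$ is very free, and \cite[Proposition II.3.7]{kollar} lets one deform $\sigma$ within $\Sigma_q(X)_{\rm adm}$ so as to avoid any algebraic subset of codimension at least $2$ --- applied to $Z_{\rm sing}$ --- and then deform once more, keeping admissibility, into a curve $\sigma'$ meeting $Z_{\rm reg}$ transversally. This normal-bundle input is the essential content missing from your sketch. Note also that the difficulty you single out at the end (translating admissibility of the intersection points into general position of the conormals of the $\mathcal F_j$) is not where the substance lies: once one has an admissible curve, in the sense of \cite{PTgeom}, meeting $Z_{\rm reg}$ transversally, the general position of the resulting foliations is exactly what admissibility encodes, and the paper treats that deduction as immediate; likewise it dismisses the necessity direction, which you develop at length, as clear from the definitions.
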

\begin{proof}
Les conditions de l'énoncé sont clairement nécessaires, montrons qu'elles sont suffisantes. 
Sous l'hypothèse qu'elles sont vérifiées, 
on peut trouver une courbe admissible $\sigma\in\Sigma_q(X)$
non-incluse dans $Z$ qui intersecte la partie régulière de $Z$.   
D'autre part, 
d'après le Théorème 2.9 et le Théorème 2.11 de \cite{PTgeom}, 
on sait que $X$ est lisse le long de $\sigma $ 
et que  $N_{\sigma/X}=\mathcal O_{\sigma }(n-1)^{\oplus r}$.   Ce fibré étant très ample,  on peut déformer $\sigma$  dans $\Sigma_q(X)_{\rm adm}$ afin   d'éviter tout sous-ensemble algébrique $S\subset X$ de codimension au moins 2, voir \cite[Proposition II.3.7]{kollar}. 

 En appliquant cela à $S=Z_{\rm sing}$, on 
obtient  qu'en prenant  $\sigma\in \Sigma_q(X)_{\rm adm}$ générale,  le 0-cycle  $\sigma \cdot Z$ est supporté dans  la partie régulière de $Z$.  \'Etant admissible,  $\sigma$ est très libre et donc on peut la déformer en $\sigma '\in \Sigma_q(X)$,  elle aussi admissible,   qui intersecte $Z_{\rm reg}$ transversalement.  On en déduit alors facilement que  $(X,\Sigma_q(X),Z)$ est admissible. \end{proof}

On suppose dorénavant que $Z$ est une hypersurface réduite de $X$  qui rencontre $X_{\rm adm}$. Le triplet $(X,\Sigma_q(X),Z)$ est donc admissible et définit un tissu de type $(r,n)$ sur $\Sigma_q(X)$, noté 
$\mathcal T_Z$. 
Soit  $\sigma_0\in \Sigma_q(X)$, une courbe  admissible de $X$ qui intersecte $Z_{\rm reg}$ transversalement en $d$ points distincts $\kappa_1(\sigma_0),\ldots,\kappa_d(\sigma_0)$.  
 Soient   $\kappa_j: (\Sigma_q(X),\sigma_0)\rightarrow (Z,\kappa_j(\sigma_0))$, $j=1,\ldots,d$,  
 les submersions holomorphes 
 telles que $\sigma \cdot Z=
  \kappa_1(\sigma)+ \cdots+\kappa_d(\sigma)$ 
  quel que soit  $\sigma\in (\Sigma_q(X),\sigma_0)$.  
Les applications $\kappa_j$ sont des intégrales premières des feuilletages du germe  de $\mathcal T_Z$ en $\sigma_0$,   noté $\mathcal T_{Z,\sigma_0}$. \sk

 \bco
 \label{C:FinieAW}
La trace induit un morphisme linéaire injectif 
\begin{align}
\label{E:Tr}
H^0\big(Z,\tilde \Omega_Z^r\big) & \longrightarrow \mathfrak A\big(\mathcal T_{Z,\sigma_0}\big) \\
 \Omega & \longmapsto \big(   \kappa_j^\star(\Omega) \big)_{j=1}^d  \nonumber
\end{align}
et par conséquent, on a  $p_g(Z)\leq  {\rm rg}(\mathcal T_{Z,\sigma_0})$. 
 \eco
 \begin{proof}  
 On note $\overline{\Sigma}_q(X)$ l'adhérence de Zariski de $\Sigma_q(X)$ dans la variété  des courbes rationnelles de degré $q$ contenues dans $X$.  
 Soit $\mathcal I_{Z,\Sigma_q(X)}$ l'adhérence de Zariski dans $Z\times  \overline{\Sigma}_q(X)$ de l'ensemble des paires  $(z, \sigma) \in Z \times \overline{\Sigma}_q(X)$ telles que $z$ appartient au support de $\sigma$.  Par restriction des deux projections du produit $Z\times  \overline{\Sigma}_q(X)$ sur chacun de ses deux facteurs, on obtient un diagramme d'incidence
 \begin{equation*}
    \xymatrix@R=0.3cm@C=1.3cm{  
 &  \mathcal I_{Z,\Sigma_q(X)} 
 \ar@{->}[dl]_{\mu}  \ar@{->}[dr]^{\nu}
  &   \\   
Z  &   & \overline{\Sigma}_q(X).}
 \end{equation*}

Si $\Omega$ est une $r$-forme finie  sur $Z$, son tiré-en-arrière  $\mu^\star(\Omega)$ est une forme finie sur 
$ \mathcal I_{Z,\Sigma_q(X)} $.  D'autre part, l'application $\nu$ est génériquement finie (elle est génériquement $k:1$, où 
 $k$ désigne le degré d'un 0-cycle $Z\cdot \sigma$  pour 
$\sigma\in \Sigma_q(X)$ admissible générale).  Du  second point du Théorème  \ref{T:AbelGriffiths} il découle donc  que   ${\rm Tr}_\nu(\mu^\star(\Omega))$ est une forme finie sur $\overline{\Sigma}_q(X)$.  Comme $\Sigma_q(X)$ est unirationnelle,  ${\rm Tr}_\nu(\mu^\star(\Omega))$ est identiquement nulle d'après le Lemme 2.8 de \cite{PTgeom}.  Par ailleurs, on vérifie aisément que 
${\rm Tr}_\nu(\mu^\star(\Omega))$ n'est rien d'autre que la trace de $\Omega$ par rapport à la famille $\Sigma_q(X)$ et s'écrit aussi localement $\sum_{j=1}^d \kappa_j^\star(\Omega)$ avec les notations introduites plus haut. 
 \sk 
 
 Pour des raisons  de degré, on a  $d\Omega=0$ et donc 
$d\kappa_j^\star (\Omega)=\kappa_j^\star(d\Omega)=0$ pour tout $j=1,\ldots,d$. Du fait que les $\kappa_j$ sont des intégrales premières de $\mathcal T_{Z,\sigma_0}$, il vient que le $d$-uplet  $(\kappa_j^*(\Omega))_{j=1}^d$ peut être vu comme une relation abélienne de ce  tissu. 
 On en déduit que l'application (\ref{E:Tr}) est bien à valeurs dans 
 l'espace des relations abéliennes de $\mathcal T_{Z,\sigma_0}$ 
 et donc est bien définie. 
 Puisqu'elle est  clairement  linéaire et injective, on obtient le corollaire.
\end{proof}

Nous finirons cette section par le  commentaire suivant: si le résultat ci-dessus est suffisant pour le but que nous nous sommes fixé dans cet article (à savoir, construire des tissus algébriques exceptionnels de rang maximal), la considéra\-tion du cas  des tissus algébriques grassmanniens nous fait nous demander s'il n'est pas possible d'améliorer le Corollaire \ref{C:FinieAW} et de prouver que l'application (\ref{E:Tr})  se prolonge à l'espace 
$H^0(Z,\omega^r_Z)$ 
des $r$-formes différentielles abéliennes\footnote{Nous appelons ``formes différentielles abéliennes" les 
sections globales sur $Z$ du faisceau $\omega^r_Z$ introduit par Barlet \cite{Barlet}   dans le cadre plus général  des variétés analytiques de dimension pure.} sur $Z$ et induit un isomorphisme entre celui-ci et $\mathfrak A(\mathcal T_{Z,\sigma_0})$, auquel cas on aurait ${\rm rg}(\mathcal T_{Z,\sigma_0})=h^0(Z,\omega^r_Z)$.  

Cela reviendrait essentiellement à démontrer des analogues du Théorème d'addition d'Abel et du Théorème d'Abel--Inverse,   non plus pour les paires  grassmanniennes   $(\mathbb P^N, G_k(\mathbb P^N))$, mais pour les paires d'incidences plus générales $(X,\Sigma_q(X))$ avec  $X\in  \mathcal X_{r+1,n}(q)$.


\section{Tissus algébriques exceptionnels}
\label{S:Tissusalgebriquesexceptionnels}
Dans cette section,  on construit des tissus algébriques exceptionnels à partir des variétés des classes $\mathcal X_{r+1,n}(2n-3)$ considérées dans la Section \ref{S:Xexceptionnelles}. 
 Plus précisément, si  $X$ désigne l'une de ces variétés, on trouve une certaine classe très ample $E_X$ dans le groupe de Picard de $X$ et on montre que $(X,\Sigma_{2n-3}(X),Z)$ est admissible pour $Z\in \lvert E_X\lvert$ générique.  Quand $Z$ est lisse, on utilise alors un résultat classique d'adjonction pour calculer $h^0(K_Z)$ et en déduire que le tissu associé à $(X,\Sigma_{2n-3}(X),Z)$ est de rang maximal et donc est exceptionnel.
\sk

La méthode de construction étant toujours la même, nous  la décrivons en détail dans le premier cas traité et sommes un peu  plus succincts
dans les cas qui suivent.

\subsection{Tissus  exceptionnels construits à partir de  $X_{{\rm Herm}_3(\boldsymbol{A})}$,  
avec  $\boldsymbol{A}= 
\boldsymbol{R}, \boldsymbol{C}, \boldsymbol{H}$ ou $ \boldsymbol{O}$}

Si $\boldsymbol{A}$ est l'une des quatre algèbres 
$\boldsymbol{R}, \boldsymbol{C}, \boldsymbol{H}$ ou $ \boldsymbol{O}$, 
on note pour simplifier $X_{\!\boldsymbol{A}}$ la courbe cubique sur l'algèbre de Jordan  ${\rm Herm}_3(\boldsymbol{A})$ décrite dans la Section \ref{S:jordancubicXJ}. 
\sk

Si  $G_{\!\boldsymbol{A}}$  désigne le groupe conforme de ${\rm Herm}_3(\boldsymbol{A})$,  on a vu que $ X_{\!\boldsymbol{A}}$ est homogène sous l'action de $G_{\!\boldsymbol{A}}$.  Cette variété étant   également  projective, on a  $X_{\!\boldsymbol{A}}=G_{\!\boldsymbol{A}}/P_{\!\boldsymbol{A}}$    pour un certain sous-groupe parabolique  $P_{\!\boldsymbol{A}} $ de $G_{\!\boldsymbol{A}}$ (cf. \cite[\!p.\,135]{Humphreys}),  qui
est maximal pour l'inclusion   parmi les sous-groupes paraboliques de $G_{\!\boldsymbol{A}}$. 
Comme le groupe de Picard de $G_{\!\boldsymbol{A}}/P_{\!\boldsymbol{A}}$ s'identifie au groupe des caractères de $P_{\!\boldsymbol{A}}$ d'après \cite[\!Theorem 4]{Popov}, il découle de la maximalité de 
$P_{\!\boldsymbol{A}}$ 
que ${\rm Pic}(X_{\!\boldsymbol{A}})$ est libre, sans torsion et  engendré par un seul   élément ample, noté $H_{\!\boldsymbol{A}}$. 
En fait, dans les cas considérés, $H_{\!\boldsymbol{A}}$ est la classe du fibré 
$\mathcal O_{\!X_{\!\boldsymbol{A}}}(1)$ 
associé au plongement 
$ X_{\!\boldsymbol{A}}\subset \mathbb P Z_2({\rm Herm}_3({\!\boldsymbol{A}}))$ décrit Section \ref{S:jordancubicXJ}.\sk

On sait aussi (cf. \cite[\!V.1.4]{kollar}) que $ X_{\!\boldsymbol{A}}$ est une variété de Fano, c'est-à-dire que  dans le groupe de Picard de  $X_{\!\boldsymbol{A}}$, on a
\begin{equation}
\label{E:KXA}
K_{ X_{\!\boldsymbol{A}}}= - i( X_{\!\boldsymbol{A}})\, H_{\!\boldsymbol{A}}, 
\end{equation}
 pour un certain entier  strictement positif $i( X_{\!\boldsymbol{A}})$,   appelé  l'{\bf indice} de $ X_{\!\boldsymbol{A}}$. \sk

     Soit $M_{0,1}(
X_{\!\boldsymbol{A}})$  la variété  des droites 1-pointées
  incluses dans $X_{\!\boldsymbol{A}}$\footnote{Plus rigoureusement, $M_{0,1}(
X_{\!\boldsymbol{A}})$  est la variété des classes d'isomorphismes de triplets $(\mathbb P^1,x,\mu)$ où $x$ est un point de $\mathbb P^1$ et $\mu: \mathbb P^1\rightarrow X_{\!\boldsymbol{A}}$ un paramétrage projectif d'une droite incluse dans $X_{\!\boldsymbol{A}}$, cf. \cite[\!\S 1.1]{fp}.}.
D'après \cite[\!Theorem 2]{fp}, si $D$ est une droite contenue dans $ X_{\!\boldsymbol{A}}$, on a $\dim (M_{0,1}(
X_{\!\boldsymbol{A}}))= \dim (X_{\!\boldsymbol{A}})+\deg (c_1(X_{\boldsymbol{A}})\lvert_D)-2$.  
La variété $X_{\!\boldsymbol{A}}$ étant homogène, on a 
$\dim( M_{0,1}(
X_{\!\boldsymbol{A}}))=\dim (X_{\!\boldsymbol{A}})+\dim( M_{\!\boldsymbol{A},o} )$ où 
$M_{\!\boldsymbol{A},o} $ désigne la famille des droites incluses dans $X_{\boldsymbol{A}}$ qui passent par un point fixé $o$ de $X_{\!\boldsymbol{A}}$. 
Par ailleurs, de  \eqref{E:KXA}, il vient  $\deg(c_1(X_{\boldsymbol{A}})\lvert_D)= 
i( X_{\!\boldsymbol{A}})\, \deg(H_{\!\boldsymbol{A}}\lvert_D)=i(X_{\!\boldsymbol{A}})$. On en déduit 
que $ i(X_{\!\boldsymbol{A}})=\dim( M_{\!\boldsymbol{A},o} ) +2$. 

 Il est connu  que les variétés  $M_{\!\boldsymbol{A},o}$ pour $
\boldsymbol{A}= 
\boldsymbol{R}, \boldsymbol{C}, \boldsymbol{H}$ ou $ \boldsymbol{O}$ sont les quatre variétés de Severi (voir \cite[Section 6]{piriorusso} ou \cite[Section 1.1]{LandsbergManivel}).  
Puisque  ces quatre variétés peuvent être vues 
 comme des complexifications des plans projectifs $\mathbb R\mathbb P^2, \mathbb C\mathbb P^2, \mathbb H\mathbb P^2$ et  $ \mathbb O\mathbb P^2$, on déduit une façon de  calculer  $ i(X_{\!\boldsymbol{A}})$ explicitement:   
%
quelle que soit  $\boldsymbol{A}$, on a 
 $$  i(X_{\!\boldsymbol{A}})=2\dim(\boldsymbol{A})+2.$$

 On récapitule et explicite dans le  tableau suivant certaines des assertions mention\-nées ci-dessus. 
\begin{table}[h]
 \centering
 \begin{tabular}{|c|c|c|c|c|c|c|}
 \hline
  $\boldsymbol{A}$   &  $\boldsymbol{{\rm Herm}_3(\boldsymbol{A})}$    &$\boldsymbol{G_{\!\boldsymbol{A}}}$     &  $\boldsymbol{X_{\!\boldsymbol{A}}}$  &   $\boldsymbol{M_{\!\boldsymbol{A},o}}$   & $\boldsymbol{i(X_{\!\boldsymbol{A}})}$ \\
 \hline   
 $\boldsymbol{R}$    &  ${\rm Sym}_3(\mathbb C)^+$ &     $Sp_6(\mathbb C)$   &   $LG_3(\mathbb C^6)$  &   $v_2(\mathbb P^2)$&4\\
 \hline
 $\boldsymbol{C}$    &  ${\rm M}_3(\mathbb C)^+$ &  $SL_6(\mathbb C)$   & $G_3(\mathbb C^6)$   &  ${\rm Seg}(\mathbb P^2\times \mathbb P^2)$    &6  \\
 \hline   
 $\boldsymbol{H}$    &  ${\rm Alt}_6(\mathbb C)^+$ & $SO_{12}(\mathbb C)$  & $OG_6(\mathbb C^{12})$  &  $G_2(\mathbb C^6)$ &10 \\
 \hline   
 $\boldsymbol{O}$    & $ {\rm Herm}_3(\boldsymbol{O})$ &$E_7$      & $E_7/P_7$  &   $\boldsymbol{O}\mathbb P^2$ &18 \\
 \hline   
\end{tabular}
\end{table}

Pour simplifier l'écriture, on pose 
 $\Sigma_{\boldsymbol{A}}=\Sigma_3(X_{\!\boldsymbol{A}})$ dans ce qui suit.\smallskip 

Pour toute   algèbre $\boldsymbol{A}$, on définit une classe très ample   
dans le groupe de Picard de  $X_{\!\boldsymbol{A}}$ 
en posant
$$
E_{\!\boldsymbol{A}}=\big(i(X_{\!\boldsymbol{A}})+1\big) H_{\!\boldsymbol{A}}
\, .
$$

La variété $X_{\!\boldsymbol{A}}$ étant  homogène, tous  ses points sont admissibles.  De ce fait,  si 
 $Z$ est un élément réduit du système linéaire  $| E_A | $, il rencontre bien 
$(X_{\!\boldsymbol{A}})_{\rm adm}=X_{\!\boldsymbol{A}}$ et par conséquent  le triplet $(X_{\!\boldsymbol{A}}, \Sigma_{\!\boldsymbol{A}}  ,Z )$ est admissible d'après la Proposition \ref{P:XVSigmaAdm}.  

On note $\mathcal T_Z$ le tissu algébrique d'incidence  défini par un tel  triplet.  
C'est un tissu sur $\Sigma_{\!\boldsymbol{A}}$ de  codimension égale à la dimension de $Z$, à savoir 
$$r_{\!\boldsymbol{A}}=\dim(X_{\!\boldsymbol{A}})-1=3\dim(\boldsymbol{A})+2.$$

 Comme $\dim(\Sigma_{\!\boldsymbol{A}})=3\, r_{\!\boldsymbol{A}}$,  le tissu 
$\mathcal T_Z$ est de  type $(r_{\!\boldsymbol{A}},3)$ et d'ordre  un certain entier $d_{\boldsymbol{A}}$ que l'on détermine facilement. 
En effet, 
vu qu'une courbe  admissible  $C\in \Sigma_{\boldsymbol{A}} $ est une cubique dans $X_{\!\boldsymbol{A}}\subset \mathbb PZ_2({\rm Herm}_3(\boldsymbol{A}))$ et puisque    
  ce plongement est donné par le système linéaire complet associé à $ H_{\!\boldsymbol{A}} $, on a  $(H_{\!\boldsymbol{A}}\cdot C )=3$ et  par conséquent 
$$d_{\boldsymbol{A}}= \big(E_{\!\boldsymbol{A}}\cdot C \big)=3\big(i(X_{\!\boldsymbol{A}})+1\big)=2\, r_{\!\boldsymbol{A}}+5=d_{exc}.\sk
$$ 
 
Pour construire des relations abéliennes de $\mathcal T_Z$, il convient d'étudier les formes finies sur $Z$.  Cela se fait au moyen d'un résultat classique d'adjonction pour lequel 
il nous faut supposer $Z$ lisse. Comme  $E_{\! \boldsymbol{A}}$ est très ample et $X_{\!\boldsymbol{A}}$ lisse, $Z$ générique dans $\lvert E_{\! \boldsymbol{A}} \lvert$ est lisse d'après le premier Théorème de Bertini. Sous cette hypothèse, on a une suite exacte 
\begin{equation}
\label{E:se}
0\rightarrow   K_{X_{\!\boldsymbol{A}}}  \longrightarrow 
K_{X_{\!\boldsymbol{A}}}( Z)
 \stackrel{{\rm Res}_Z}{\longrightarrow} K_Z\rightarrow 0
\end{equation}
où $K_{X_{\!\boldsymbol{A}}}( Z)=K_{X_{\!\boldsymbol{A}}}\otimes \mathcal O_{X_{\!\boldsymbol{A}}}(Z)$ 
 est le faisceau des $(r+1)$-formes  rationnelles sur $X_{\!\boldsymbol{A}}$ avec des pôles d'ordre 1 le long de $Z$ et où 
${\rm Res}_Z$ désigne le ``résidu de Poincaré'' (cf. \cite[p. 147]{GH}).  
\begin{align*}
\mbox{Par ailleurs, on a 
}\,  h^1\big( K_{X_{\!\boldsymbol{A}}} \big)= &\;  h^{r_{\!\boldsymbol{A}}}\big(
\mathcal O_{X_{\!\boldsymbol{A}}}
 \big) && \mbox{(par la dualité de Serre)} \\ 
= &\;  h^{0,r_{\!\boldsymbol{A}}}\big(X_{\!\boldsymbol{A}} \big) && \mbox{(par l'isom. de Dolbeault)} \\ 
= &\;  h^{r_{\!\boldsymbol{A}},0}\big(X_{\!\boldsymbol{A}} \big) && \mbox{(d'après la théorie de Hodge)} \\ 
= &\;  h^{0}\big( \Omega_{X_{\!\boldsymbol{A}}}^{r_{\!\boldsymbol{A}}} \big) && \mbox{(par l'isom. de Dolbeault)}.
\end{align*}

Comme $X_{\!\boldsymbol{A}}$ est rationnelle, on a  $h^0(\Omega_{X_{\!\boldsymbol{A}}}^{q})=0$ pour tout $q$ strictement positif   et donc $h^0(K_{X_{\!\boldsymbol{A}}})=h^1(K_{X_{\!\boldsymbol{A}}})=0$. 
Le premier morceau non-trivial de la suite longue de cohomologie associée à  (\ref{E:se}) nous donne donc un isomorphisme linéaire
$ H^0(X_{\!\boldsymbol{A}}, K_{X_{\!\boldsymbol{A}}}( Z) ) \simeq H^0(Z, K_{Z})$. 
%

 Or, par la définition même de $E_{\!\boldsymbol{A}}$, on a 
$K_{X_{\!\boldsymbol{A}}}( Z)=H_{\! \boldsymbol{A}}=[ \mathcal O_{\!X_{\!\boldsymbol{A}}}(1) ]$ et donc  
$h^0(K_Z)=h^0\big(\mathcal O_{\!X_{\!\boldsymbol{A}}}(1)\big)
=2\, r_{\!\boldsymbol{A}}+4$.
 D'après le Corollaire \ref{C:FinieAW}, cela implique que ${\rm rg}(\mathcal T_Z)\geq 2\, r_{\!\boldsymbol{A}} +4$. 
Vu que   par ailleurs ${\rm rg}(\mathcal T_Z)\leq \rho_{r_{\!\boldsymbol{A}},3}(d_{exc})=2r_{\!\boldsymbol{A}}+4$, on obtient que $\mathcal T_Z$ est de rang maximal.  
\sk 

Enfin, il est facile 
 de vérifier que 
$X_{\mathcal T_Z}=X_{\!\boldsymbol{A}}$, i.e. que 
la  variété de Blaschke du tissu $\mathcal T_Z$ n'est rien d'autre que  
$X_{\!\boldsymbol{A}}$.  Cette dernière  n'étant pas standard d'après la Proposition \ref{P:XJnonStandard}, on déduit du  
Théorème \ref{T:1} que $\mathcal T_Z$ n'est pas algébrisable au sens classique. On a obtenu le 
\bt
Si $Z$ est une hypersurface lisse du  système linéaire $\lvert  E_{\!\boldsymbol{A}}\lvert$, 
 alors  
 $(X_{\!\boldsymbol{A}}, \Sigma_{\!\boldsymbol{A}},Z)$ est admissible et définit  un $d_{exc}$-tissu algébrique exceptionnel  de type $(3\dim(\boldsymbol{A})+2,3)$ sur la famille de cubiques  $\Sigma_{\!\boldsymbol{A}}$. \et

L'énoncé analogue en supposant seulement $Z$ réduite et pas forcément lisse est très certainement encore valable,  voir Section \ref{S:ZpasLisse} plus loin. 


\subsection{Tissus  exceptionnels construits à partir de  ${\rm Seg}(\mathbb P^1\times Q)$} 
\label{S:WebExceptfromP1Q}
Soit $r$ un entier plus grand ou égal à 2.
On désigne par $Q$  une hypersurface quadrique lisse de  $ \mathbb P^{r+1}$ et 
on note $X$ le plongement de Segre ${\rm Seg}(\mathbb P^1\times Q)\subset \mathbb P^{2r+3}$.  Soit $H_{\mathbb P^1}$ (resp. $H_Q$), la classe de l'image réciproque de $\mathcal O_{\mathbb P^1}(1)$ (resp. $\mathcal O_Q(1)$) par la projection canonique $\pi_1:X\rightarrow \mathbb P^1$ (resp. $\pi_2:X\rightarrow Q$). 
On sait que le groupe de Picard de $X$ est librement engendré par $H_{\mathbb P^1}$ et $H_Q$ et que $H_{\mathbb P^1}+H_Q$ est très ample et correspond au plongement de Segre $X\subset \mathbb P^{2r+3}$ considéré ici. 

On pose alors 
$$
E_X= 3{H}_{\mathbb P^1}+(r+1)H_{Q}.
$$
 
 Soit $c:\mathbb P^1 \rightarrow C$, un paramétrage projectif  d'une courbe $C$ de la famille $\Sigma_3(X)$.  Si l'on suppose $C$ générale, on vérifie que $\pi_1\circ c:\mathbb P^1\rightarrow \mathbb P^1$ est un isomorphisme
 et donc $(C\cdot H_{\mathbb P^1})=1$.  Comme $\pi_2\circ c$ paramètre $\pi_2(C)$ qui est une conique dans $\mathbb P^{r+1}$, il vient $(C\cdot H_{Q})=2$.  On en déduit que 
 $( C\cdot E_X  \big)=2r+5$. 
 Comme $X$ est homogène, on a $X=X_{\rm adm}$ et donc toute hypersurface réduite $Z\in  \lvert  E_X\lvert $ est  telle que $(X,\Sigma_3(X),Z)$ est admissible et donc définit un tissu de type $(r,3)$ sur $\Sigma_3(X)$, noté $\mathcal T_Z$. \sk
 
 Supposons maintenant qu'en plus d'être réduite, $Z$ soit  lisse.  Par adjonction, on obtient un isomorphisme $H^0(K_X(Q))\simeq H^0(K_Z)$. Comme $K_X(Q)=\mathcal O_X(H_{\mathbb P^1}+H_{Q})$, il vient  $h^0(K_Z)=2r+4$, ce qui nous donne le 
\bt 
Si $Z$ est une hypersurface lisse du  système linéaire $\lvert E_{X}\lvert$
, alors  le triplet 
 $(X, \Sigma_3(X),Z)$ est admissible et définit  un tissu algébrique exceptionnel  de type $(r,3)$ sur la famille de cubiques $\Sigma_3({X})$.
  \et
  

\subsection{Une famille de tissus  exceptionnels de type $(r,4)$} 
\label{S:W3r+7(4,r)}  On fixe $r\geq 2$ et on reprend les notations de la Section  \ref{S:3H-2Q}.  Pour  construire des  tissus exceptionnels sur  
 $\Sigma_Q=\Sigma_5(X_Q)$, il nous faut d'abord obtenir des informations sur la géométrie de la variété $X_Q$ elle-même. 

\subsubsection{Quelques propriétés géométriques de $X_Q$} 
La variété $X_Q$ étant définie comme l'image de  $\varphi=\varphi_{\lvert 3H -2Q \lvert}: \mathbb P^{r+1}\dashrightarrow  \mathbb P^{3r+5}$, on va  commencer par résoudre les indéterminations de cette
application rationnelle.

On vérifie tout d'abord  que le lieu d'indétermination 
   de $\varphi$ est le sous-espace $\Pi=\langle Q\rangle$ engendré par $Q$. On considère alors 
 $\mu_1: X_1={\rm Bl}_\Pi(\mathbb P^{r+1})\rightarrow \mathbb P^{r+1}$, 
l'éclatement de $\mathbb P^{r+1}$ le long de $\Pi$. Le diviseur exceptionnel associé $E_1=\mu_1^{-1}(\Pi)$ s'identifie au projectifié du fibré normal de $\Pi$. Comme $N_{\Pi/\mathbb P^{r+1}}\simeq \mathcal O_\Pi(1)\oplus \mathcal O_\Pi(1) $, on en déduit  une identification naturelle  $E_1\simeq \Pi\times \mathbb P^1$.  Modulo celle-ci, la restriction 
de $\mu_1$ à $E_1$ s'identifie à la projection canonique $\Pi\times \mathbb P^1\rightarrow \Pi$ sur le premier facteur.  Il en découle que $Q_1=\mu_1^{-1}(Q)$ est isomorphe à $Q\times \mathbb P^1$.  

Soit $\varphi_1: X_1\dashrightarrow X_Q$ l'application birationnelle qui rend commutatif le  diagramme suivant: 
\begin{equation*}
    \xymatrix@R=0.7cm@C=1.9cm{  
    X_1   
       \ar@/^1pc/@{-->}[rd]^{\varphi_1\qquad }  \ar@{->}[d]_{\mu_1}
 &     \\ 
\mathbb P^{r+1} \ar@{-->}[r]_{\varphi \; \quad  }
& X_Q   \;  .
}
 \end{equation*}

On note $H_1$ la classe de l'image inverse par $\mu_1$ d'un hyperplan général de $\mathbb P^{r+1}$.
\begin{lemm}
 L'application $\varphi_1$ est induite par le système linéaire  $\mathcal L(\varphi_1)=\lvert 3H_1-E_1-Q_1\lvert$   
    des éléments de  $\lvert 3H_1-E_1 \lvert$  qui contiennent    $Q_1$. On a:
$$ \mathcal L(\varphi_1)=\varphi_1^{-1}\big(\lvert \mathcal O_{X_Q}(1)\lvert\big)
= \mathbb P H^0\Big(X_1,\mathcal O_{X_1}(3H_1-E_1)\otimes \mathcal I_{Q_1}\Big).
$$

En particulier, le schéma de base de $\varphi_1$ est $Q_1$. 
\end{lemm}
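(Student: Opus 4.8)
The plan is to make everything explicit in the homogeneous coordinates $u_0,u_1,s_1,\dots,s_r$ of Section \ref{S:3H-2Q}, where $\Pi=\{u_0=u_1=0\}$ and $Q=\Pi\cap\{q(s)=0\}$, and to read off the behaviour along $\Pi$ of the generating cubics \eqref{E:l3H-2Ql} of $|3H-2Q|=\mathbb P H^0(\P^{r+1},\mathcal I_Q^2(3))$. First I would observe that each generator lies in $I(\Pi)=(u_0,u_1)$ and compute its multiplicity along $\Pi$: the $u_{\epsilon_1}u_{\epsilon_2}u_{\epsilon_3}$ vanish to order $3$, the $u_{\epsilon_1}u_{\epsilon_2}s_j$ to order $2$, and the $u_{\epsilon_1}q(s)$ to order exactly $1$ (since $q$ does not vanish identically on $\Pi$). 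A general member therefore has multiplicity exactly one along $\Pi$, so $\mu_1^\star|3H-2Q|$ has $E_1$ as a reduced fixed component, and its moving part $\mathcal L(\varphi_1)=\varphi_1^{-1}(|\mathcal O_{X_Q}(1)|)$ (recall $\varphi_1=\varphi\circ\mu_1$) is contained in $|3H_1-E_1|$.

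The heart of the argument is to identify this moving part with the members of $|3H_1-E_1|$ containing $Q_1$. From $\mu_{1\star}\mathcal O_{X_1}(3H_1-E_1)=\mathcal I_\Pi(3)$ one gets $H^0(X_1,\mathcal O_{X_1}(3H_1-E_1))=H^0(\P^{r+1},\mathcal I_\Pi(3))$, i.e. the cubics $F=u_0A+u_1B$ with $A,B$ quadrics. I would then compute in the affine chart $u_1=u_0v$ of $X_1$: there $E_1=\{u_0=0\}$, which via the identification $E_1\simeq\Pi\times\P^1$ coming from $N_{\Pi/\P^{r+1}}\simeq\mathcal O_\Pi(1)^{\oplus2}$ carries $v$ as coordinate on the $\P^1$-factor and $Q_1\simeq Q\times\P^1$ as $\{q(s)=0\}$. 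Dividing out the single factor $u_0$ exhibits the strict transform, whose restriction to $E_1$ equals $A|_\Pi+v\,B|_\Pi$. This vanishes along $Q_1=\{q=0\}\times\P^1$ for all $v$ if and only if both quadrics $A|_\Pi$ and $B|_\Pi$ vanish on $Q=\{q=0\}\subset\Pi$; since the degree-$2$ piece of the ideal of the smooth quadric $\{q=0\}$ is $\C\cdot q$, this is equivalent to $A,B\in I(Q)=(u_0,u_1,q)$, hence to $F\in\mathcal I_Q^2(3)$. This is precisely the defining condition of $|3H-2Q|$, which yields the announced chain
\[
\mathcal L(\varphi_1)=\varphi_1^{-1}\big(|\mathcal O_{X_Q}(1)|\big)=|3H_1-E_1-Q_1|=\mathbb P H^0\big(X_1,\mathcal O_{X_1}(3H_1-E_1)\otimes\mathcal I_{Q_1}\big).
\]

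Finally, to pin down the base scheme I would compute the strict transforms of the generators in the same chart: the $u_{\epsilon_1}u_{\epsilon_2}u_{\epsilon_3}$ give $u_0^2v^k$ ($0\le k\le 3$), the $u_{\epsilon_1}u_{\epsilon_2}s_j$ give $u_0v^ks_j$ ($0\le k\le 2$), and the $u_{\epsilon_1}q$ give $q$ and $vq$. Since the indeterminacy locus of $\varphi$ is $\Pi$, the base locus of $\mathcal L(\varphi_1)$ meets $X_1$ only along $E_1$; restricting to $E_1$ annihilates every generator carrying a factor $u_0$ and leaves $\langle q,vq\rangle$, which vanishes exactly along $Q_1$. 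Near a general point of $Q_1$ some $s_j$ is nonzero, so the local base ideal contains $u_0s_j$ and $q$, forcing it to equal $(u_0,q)$, the reduced ideal of the codimension-two complete intersection $Q_1=\{u_0=q=0\}$; the symmetric chart $u_0=u_1w$ is handled identically. Hence the base scheme of $\varphi_1$ is exactly $Q_1$.

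I expect the two delicate points to be, first, the translation in the middle paragraph of the geometric condition that the strict transform contain $Q_1$ into the algebraic double-point condition $A,B\in I(Q)$, which rests entirely on the product identifications $E_1\simeq\Pi\times\P^1$ and $Q_1\simeq Q\times\P^1$; and second, establishing that the base scheme is reduced and not merely set-theoretically $Q_1$, for which the presence of the linear generators $u_0s_j$ next to $q$ is exactly what cuts the local base ideal down to $(u_0,q)$.
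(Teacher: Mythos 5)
Votre démonstration est correcte et suit pour l'essentiel la même voie que celle du texte : même carte affine sur l'éclatement $X_1$, même calcul de transformée stricte, et même fait clé que les quadriques de $\Pi$ s'annulant sur $Q$ sont les multiples de $q$ — le texte le formule via la condition de point double $dF_x=0$ pour tout $x\in Q$, vous via l'appartenance idéale $F\in\mathcal I_Q^2(3)$ après restriction à $E_1$, ce qui revient au même. Votre calcul explicite de l'idéal de base $(u_0,q)$ à partir des transformées strictes des générateurs est un petit plus : il justifie en détail l'assertion finale (\emph{le schéma de base de $\varphi_1$ est $Q_1$}, avec sa structure réduite) que le texte déduit sans commentaire de l'identification $\mathcal L(\varphi_1)=\lvert 3H_1-E_1-Q_1\lvert$.
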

\begin{proof}  La transformée totale de $\lvert 3H-2Q_1\lvert$ par $\mu_1$ est un sous-système linéaire de $\lvert 3H_1\lvert$ dont la composante fixe est $E_1$. Il en découle que $\varphi_1$ est induite par un sous-système   linéaire de $\lvert 3H_1-E_1 \lvert$.\sk 

Soient  $(u,s)=(u_0,u_1,s_1,\ldots,s_r)$, les coordonnées homogènes sur $\mathbb P^{r+1}$ introduites 
 dans la   Section  \ref{S:3H-2Q}. On se place sur la carte affine $U\subset \mathbb P^{r+1}$ donnée par l'équation $s_r=1$. 
 Alors, sur un certain ouvert affine $U_1\subset \mu_1^{-1}(U)$ de $X_1$, 
 il existe des coordonnées affines  
 $(v,s)=(v_0,v_1,s_1,\ldots,s_{r-1})$  telles que la restriction de  $\mu_1$ à ${U_1}$ s'écrive 
\begin{equation}
\label{E:mu1surU1}
 (v,s)=(v_0,v_1,s_1,\ldots,s_{r-1})\longmapsto  (v_0,v_0v_1,s_1,\ldots,s_{r-1})=(u,s). 
 \end{equation}
 
  Les hypersurfaces cubiques de  ${\mathbb P^{r+1}}$ qui contiennent  $\Pi$  sont exactement celles qui sont découpées par les polynômes homogènes  de la forme  
$$
F(u,s)=R_3(u)+\sum_{j=1}^r P_2^j(u)s_j+ (c_0u_0+c_1u_1)P_2(s)
$$
où $R_3,P_2$ et les $P_2^j$ sont des polynômes homogènes  (de degré 3 et 2 respectivement) et où $c_0,c_1$  sont des constante complexes.   En utilisant \eqref{E:mu1surU1},  on obtient  que la transformée stricte   par $\mu_1$ 
 d'une telle  hypersurface cubique   correspond  exactement à l'hypersurface de $U_1$  découpée par l'équation 
$$
F_1(v,s)=v_0^2R_3(1,v_1)+ v_0\sum_{j=1}^r P_2^j(1,v_1)s_j+ (c_0+c_1v_1)P_2(s). 
$$

On vérifie immédiatement que $dF_x=0$ pour tout $x\in Q$ si et seulement si $q(s)$ divise $P_2(s)$
et que cela  équivaut à ce que $F_1(x_1)=0$ pour tout $  x_1\in Q_1$.  Cela signifie bien que $\mu_1^{-1}(\lvert 3H-2Q\lvert)=\lvert 3H_1-E_1-Q_1\lvert$ et démontre le lemme. 
\end{proof}

Le lemme ci-dessus implique que l'on va obtenir  une résolution lisse de $\varphi_1$ en éclatant $X_1$ le long de $Q_1$. On note $\mu_2: X_2={\rm Bl}_{Q_1}(X_1)\rightarrow  X_1$ cet éclatement. Alors il existe un morphisme $\varphi_2:X_2\rightarrow X_Q$ qui rend le diagramme suivant commutatif: 
\begin{equation*}
    \xymatrix@R=0.7cm@C=1.9cm{   
    X_2 \ar@{->}[d]_{\mu_2}     \ar@/^2pc/@{->}[rdd]^{\varphi_2\qquad\quad   } 
    \\
    X_1   
       \ar@/^1pc/@{-->}[rd]^{\varphi_1\qquad }  \ar@{->}[d]_{\mu_1}
 &    \\ 
\mathbb P^{r+1} \ar@{-->}[r]_{\varphi \; \quad  }
&   X_Q \;  .
}
\end{equation*}

De plus, de la propriété universelle de l'éclatement, on déduit que  
$$\varphi_2^*(\mathcal O_{X_Q}(1))=\mu_2^*(3H_1-E_1)-Q_2=3H_2-\varphi_2^*(E_1)-Q_2=3H_2-E_2-2Q_2,  $$
avec $H_2=\mu_2^*(H_1)$ et  $Q_2=\mu_2^{-1}(Q_1)$ ainsi que $E_2=\mu_2^{-1}(E_1)$.  Puisque $X_Q\subset \mathbb P^{3r+5}$ est linéairement normale, il en découle 
 que $\varphi_2$ est induite par 
 $\mathcal L(\varphi_2)=\lvert 3H_2-E_2-2Q_2 \lvert$. \sk 

Nous allons expliciter l'application $\varphi_2$ dans des coordonnées affines adaptées.  Pour cela, il est  pratique de supposer que les  $s_i$ ont été choisis de telle sorte que 
 la forme quadratique $q$ s'écrive $q(s)=s_1s_r-\sum_{i=2}^r s_i^2$, ce qui ne fait pas perdre en généralité.  On reprend les notations introduites dans la preuve du lemme précédent. 
 Sur l'ouvert affine $U_1$ de $X_1$,  on définit un nouveau système de coordonnées affines $(v,\sigma)=(v_0,v_1,\sigma_1,\ldots,\sigma_{r-1})$ en posant $\sigma_1=q(s_1,\ldots,s_{r-1},1)$ et  $\sigma_j= s_j$ pour $ j=2,\ldots,r-1$.
Dans ces nouvelles coordonnées,  $E_1\cap U_1$ et $Q_1\cap U_1$ ont respectivement pour équations $v_0=0$ et $v_0=\sigma_1=0$,  et les restrictions à $U_1$ des éléments de $\mathcal L(\varphi_1)$ sont les hypersurfaces découpées par les équations 
\begin{equation}
\label{E:F1surU1}
0= v_0^2R_3(v_1)+v_0\bigg[P_2^1(v_1)\big(\sigma_1+p(\sigma)\big)+ \sum_{j=2}^{r-1}P_2^j(v_1)\sigma_j+ P_2^r(v_1)\bigg]+(c_0+c_1v_1)\sigma_1,
\end{equation}
 où $P_2^1,\ldots,P_2^r$ et $R_3$  sont des polynômes non-homogènes, de degré 2 et 3 respectivement et où l'on a posé $p(\sigma)=\sum_{i=2}^{r-1}\sigma_i^2$. 
 
  Soient $(w_0,v_1,w_1,\sigma_2,\ldots,\sigma_{r-1})$ les coordonnées sur un certain  ouvert affine  $U_2\subset \mu_2^{-1}(U_1)$  telles   que  la restriction de $\mu_2$ à ${U_2}$
 s'écrive 
\begin{equation}
\label{E:mu2surU2}
 (w_0,v_1,w_1,\sigma_2,\ldots,\sigma_{r-1})\longmapsto  
  (w_0w_1,v_1,w_1,\sigma_2,\ldots,\sigma_{r-1})=(v,\sigma).
 \end{equation}
 
 Dans les coordonnées considérées sur $U_2$, les diviseurs $E_2\cap U_2$ et $Q_2\cap U_2$ sont respectivement découpés par les équations $w_0=0$ et $w_1=0$.  Au moyen de l'expression explicite \eqref{E:mu2surU2} de $\mu_2\lvert_{U_2}$, on détermine facilement l'équation de la transformée stricte par $\mu_2$ d'une hypersurface  découpée par une équation \eqref{E:F1surU1}. 
 On en déduit qu'une base de l'espace des  composantes de la restriction à $U_2$ de $\varphi_2: X_2\rightarrow \mathbb P^{3r+5}$  est donnée par l'ensemble des fonctions polynomiales suivantes
  \begin{equation}
 \label{E:varphi2}
  1\, ,\,v_1\, ,\,w_0 v_1^k \, ,\,w_0v_1^k \sigma_j\, ,\,  w_0v_1^k \big( w_1+p(\sigma) \big)\, ,\, w_0^2w_1 v_1^k\, ,\, w_0^2w_1 v_1^3, 
 \end{equation}
 où  $j=2,\ldots,r-1$ et $k=0,1,2$.
Au moyen des nouvelles variables  $x_0,\ldots,x_r$ liées aux précédentes par les relations 
\begin{equation}
\label{E:xrel}
x_0= w_0\, ,\,  x_1=  v_1
\, ,\, 
x_2= w_0\, \sigma_2\, , \, \ldots , \, x_{r-1}= w_0\, \sigma_{r-1}    
\, ,\,   x_r= w_0 \big( w_1+p(\sigma)\big) , 
\end{equation}
on vérifie que $\varphi_2(U_2)$ peut aussi être décrit comme l'image de l'application $\tilde\varphi_2: \mathbb C^{r+1}\rightarrow \mathbb P^{3r+5}$ dont les composantes sont 
 \begin{equation}
 \label{E:tildevarphi2}
 1\, ,\, x_1 \, ,\,     x_0x_1^k\, ,\, x_l x_1^k\, ,\, \big(x_0x_r-p(x)\big)x_1^k\, ,\,\big(x_0x_r-p(x)\big)x_1^3
  \end{equation}
avec $l=2,\ldots,r$ et $k=0,1,2$, où l'on a posé $p(x)=
\sum_{i=2}^{r-1}x_i^2$.   \medskip 

Nous sommes maintenant en mesure d'énoncer les propriétés de la variété $X_Q$ que nous utiliserons pour construire des tissus de rang maximal sur $\Sigma_Q$.  Pour simplifier,  
 on pose $X_3=X_Q$ dans l'énoncé suivant: 
\begin{prop}${}^{}$ 
\begin{enumerate}
\item[{\it 1.}]  L'image $E_3=\varphi_2(E_2)$ de $E_2$ par $\varphi_2$ est une droite de $\mathbb P^{3r+5}$ et  la restriction de $\varphi_2$ à ${E_2}$ s'identifie à la projection canonique $\Pi\times \mathbb P^1\rightarrow \mathbb P^1$ sur le second facteur.\sk 
\item[{\it 2.}]  Par restriction,  $\varphi_2$ induit un isomorphisme  $X_2\setminus E_2\simeq X_3\setminus E_3$.
\sk 
\item[{\it 3.}]  La variété $X_3$ est lisse et $\varphi_2$ est l'éclatement de $X_3$ le long de $E_3$.\sk  
\item[{\it 4.}] Si  $H_3$ et $Q_3$ désignent  les images directes de $H_2$ et $Q_2$ par $\varphi_2$, on a 
$$K_{X_3}=-(r+2)H_3+2Q_3\qquad 
\mbox{ et }\qquad \big[\mathcal O_{X_3}(1)\big]=3H_3-2Q_3.
$$ 
\end{enumerate}
\end{prop}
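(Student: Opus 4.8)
The whole proposition can be read off from the explicit resolution constructed above, the only genuinely geometric input being the blow-down structure in Assertion~3. I would begin in the chart $U_2$, where by \eqref{E:xrel} and \eqref{E:tildevarphi2} the morphism $\varphi_2$ is the composite of the biregular change of variables \eqref{E:xrel} (valid where $w_0\neq 0$) with the map $\tilde\varphi_2$. On $E_2\cap U_2=\{w_0=0\}$ one has $x_0=0$ and $x_l=x_r=0$ for $l\geq 2$, so every component of \eqref{E:tildevarphi2} vanishes except $1$ and $x_1=v_1$; hence $\varphi_2(E_2\cap U_2)$ is the line $\{[1:x_1:0:\cdots:0]\}$, depending only on the fibre coordinate $v_1=u_1/u_0$ of $E_1\simeq\mathbb P(N_{\Pi/\mathbb P^{r+1}})\simeq\Pi\times\mathbb P^1$. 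Checking the remaining charts (which are symmetric in the $\mathbb P^1$-direction) shows the image is a single line $E_3$ and that $\varphi_2|_{E_2}$ is precisely the projection onto the second factor, which is Assertion~1.

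For Assertions~2 and~3 the key remark is that the $k=0$ components of \eqref{E:tildevarphi2}, namely $1,\,x_0,\,x_1,\,x_2,\dots,x_r$, already recover all the coordinates $(x_0,\dots,x_r)$ by a linear projection $\mathbb P^{3r+5}\dashrightarrow\mathbb P^{r+1}$. Hence $\tilde\varphi_2$ is a closed embedding of the whole chart $\mathbb C^{r+1}$, and its image is a smooth affine piece of $X_3$. Letting the charts vary covers $X_3$, $E_3$ included, so $X_3$ is smooth; this is the first half of Assertion~3. On the locus $w_0\neq 0$ the change \eqref{E:xrel} is biregular, so $\varphi_2$ restricted there is an isomorphism onto $\{x_0\neq 0\}$, and gluing the charts gives the isomorphism $X_2\setminus E_2\simeq X_3\setminus E_3$ of Assertion~2.

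To see that $\varphi_2$ is exactly the blow-up of $X_3$ along $E_3$, I would inspect the local model \eqref{E:xrel}: for fixed $v_1$ the remaining variables transform by $(w_0,\sigma_j,w_1)\mapsto(w_0,\,w_0\sigma_j,\,w_0(w_1+p(\sigma)))$, which after the shear $w_1\mapsto w_1+p(\sigma)$ is the standard affine chart of the blow-up of the codimension-$r$ smooth centre $E_3\simeq\mathbb P^1$. Equivalently, one verifies the hypotheses of the Fujiki--Nakano contraction criterion: $\varphi_2$ is a birational morphism from the smooth $X_2$ contracting the irreducible divisor $E_2$, with $\varphi_2|_{E_2}\colon\Pi\times\mathbb P^1\to\mathbb P^1$ a $\mathbb P^{r-1}$-bundle and $\mathcal O_{E_2}(E_2)$ restricting to $\mathcal O(-1)$ on each fibre $\Pi\simeq\mathbb P^{r-1}$; this simultaneously yields the smoothness of $X_3$ and the blow-up description.

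Assertion~4 is then a formal pushforward. The two blow-up formulas give $K_{X_1}=-(r+2)H_1+E_1$ and, using $\mu_2^\star E_1=E_2+Q_2$ as in the derivation of $\varphi_2^\star\mathcal O_{X_3}(1)$ above, $K_{X_2}=-(r+2)H_2+E_2+2Q_2$. Since $(\varphi_2)_\star E_2=0$, $(\varphi_2)_\star H_2=H_3$, $(\varphi_2)_\star Q_2=Q_3$, and $K_{X_3}=(\varphi_2)_\star K_{X_2}$ for a birational morphism of smooth projective varieties, I obtain $K_{X_3}=-(r+2)H_3+2Q_3$; pushing forward the identity $\varphi_2^\star\mathcal O_{X_3}(1)=3H_2-E_2-2Q_2$ likewise gives $[\mathcal O_{X_3}(1)]=3H_3-2Q_3$. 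The main obstacle is the blow-up claim in Assertion~3: establishing smoothness of $X_3$ along $E_3$ and, above all, that $\varphi_2$ is globally a single monoidal transformation (equivalently that $E_3$ has the expected normal bundle, so that the discrepancy is $r-1$) requires either the careful cross-chart verification that the local models \eqref{E:xrel} glue, or the contraction criterion; everything else is formal bookkeeping once the explicit charts are in hand.
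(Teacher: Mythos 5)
Your proposal is correct and takes essentially the same approach as the paper: assertions 1--3 are read off from the explicit chart expressions \eqref{E:varphi2}, \eqref{E:tildevarphi2} and \eqref{E:xrel} (the paper likewise notes that $\tilde\varphi_2$ is an embedding because $x_0,\ldots,x_r$ occur among its components, and that the blow-up structure then follows from \eqref{E:xrel}), while assertion 4 is the same push-forward computation starting from $K_{X_2}=-(r+2)H_2+E_2+2Q_2$ and $\varphi_2^*\mathcal O_{X_3}(1)=3H_2-E_2-2Q_2$. Your appeal to the Fujiki--Nakano contraction criterion is only a supplementary alternative for the global blow-up claim, which the paper instead dispatches by observing that $X_2$ is covered by finitely many affine charts on which $\varphi_2$ has essentially the same expression as on $U_2$.
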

\begin{proof}
La variété $X_2$ est recouverte par un nombre fini de cartes affines dans lesquelles $\varphi_2$  s'exprime essentiellement de la même façon que dans la carte particulière $U_2$  considérée ci-dessus. Pour établir les points {\it 1.},  {\it 2.}  et {\it 3.}, on peut donc se borner à démontrer les énoncés correspondants lorsqu'on se restreint à $U_2$. 
En gardant ce fait à l'esprit, \medskip 
\begin{itemize}
\item  on déduit les points {\it 1.} et {\it 2.} de l'expression explicite \eqref{E:varphi2} des composantes de $\varphi_2$ sur $U_2$ (modulo des vérifications faciles laissées au lecteur);\medskip 
\item  on obtient la première partie du point {\it 3.} en observant que $\tilde\varphi_2: \mathbb C^{r+1}\rightarrow \mathbb P^{3r+5}$ est un plongement puisque 
$x_0,\ldots,x_r$ en sont des composantes d'après  \eqref{E:tildevarphi2}. Que $\varphi_2$ soit l'éclatement de $X_3$ le long de $E_3$ découle alors des relations \eqref{E:xrel}.  
\medskip 
\end{itemize}

Comme $\varphi_2$ contracte $E_2$, le point {\it 4.} est une conséquence facile du fait que $\varphi_2$ est induite par le système linéaire $\lvert 3H_2-E_2-2Q_2\lvert$ (voir plus haut) et que 
$$K_{X_2}=(\mu_1\circ \mu_2)^*(K_{\mathbb P^{r+1}})+\mu_2^*(E_1)+Q_2=-(r+2)H_2+E_2+2Q_2 
$$
 d'après \cite[Exercise II.8.5.(b)]{hartshorne}, vu que $\mu_2^*(E_1)=E_2+Q_2$.
\end{proof}

\begin{rem*} {\rm 
La quadrique  $Q$ est irréductible si et seulement si $r>2$. Dans ce cas, le groupe de Picard de $X_Q$ est librement engendré par les classes $  H_3 $ et $Q_3$. Par contre, $Q$ est l'union de deux points distincts lorsque $r=2$. Dans ce cas,  $Q_3$ n'est pas irréductible mais est la somme  de deux diviseurs irréductibles $Q_3'$ et $Q_3''$ qui s'intersectent le long de la droite $E_3$ et le groupe de Picard de $X_Q$ est libre, de rang 3,  engendré  par les classes $H_3,Q_3'$ et $Q_3''$.}
\end{rem*} 

On se donne maintenant  une cubique gauche $C_0$ dans $\mathbb P^{r+1}$  qui correspond à une quintique rationnelle générique $C=\varphi_2(C_0)$ de la famille $\Sigma_Q$. Vu la fin de la Section \ref{S:3H-2Q} et par généricité,  on sait que $C_0$ intersecte le sous-espace projectif $\Pi$ transversalement   en deux points distincts $a, a'$  de $Q$.  La  transformée stricte $C_1=\mu_1^{-1}(C_0)$   de $C_0$ sur $X_1$ intersecte $E_1$ en deux points  de $Q_1$, et cela de façon transverse 
 puisque ni $a$ ni $a'$ n'est un point d'inflexion de $C_0$  (qui n'en a pas, vu que c'est une cubique gauche).  Par conséquent, on obtient que la transformée stricte $C_2$ 
 de $C_1$ par $\mu_2$
  intersecte $Q_2$ en deux points distincts et 
 ne rencontre pas $E_2$.  
D'après  le point {\it 2.} de la proposition ci-dessus, cela implique que  $(C\cdot Q_3)=2$.   

Des arguments précédents, on peut  également déduire que  l'ensemble  des points admissibles de $X_Q$ est exactement $X_Q\setminus E_3$.  
\medskip 
 
 Nous avons maintenant à disposition tout ce dont nous avons   besoin pour construire des tissus algébriques d'incidence de rang maximal sur la famille de quintiques rationnelles $\Sigma_Q$.  \sk
 
    \subsubsection{Tissus de rang maximal de type $(r,4)$sur $\Sigma_Q$}
 
 On pose  
 $$
 E_{X_Q}=(r+5)H_3-4Q_3 \in {\rm Pic}(X_Q). 
 $$
 
  Soit $Z$  une hypersurface réduite du système linéaire $\lvert 
   E_{X_Q} \lvert$. Elle rencontre $(X_Q)_{\rm adm}=X_Q\setminus E_3$,   le triplet 
  $ (X_Q,\Sigma_Q,Z)$ est donc admissible d'après la Proposition \ref{P:XVSigmaAdm}
  et définit un tissu de type $(r,4)$ noté $\mathcal T_Z$.  
  Pour  $C\in \Sigma_Q$ générale, on a  $(C\cdot H_Q)=3$ et $(C\cdot Q_2)=2$ et donc 
  $(C\cdot  E_{X_Q})=3(r+5)-8$  . On en déduit que 
  $\mathcal T_Z$ est d'ordre 
     $d_{exc}(r,4)$. 
  \sk

On peut écrire $ E_{X_Q}=6H_3-4Q_3+(r+5-6)H_3$. Comme $6H_3-4Q_3= \mathcal O_{X_Q}(2)$ et puisque $(r+5-6)H_3$ est effectif vu que $r>1$, la classe $ E_{X_Q}$ est très ample. Par conséquent,  l'élément général $Z$ de $\lvert E_{X_Q} \lvert$ est lisse.  Sous cette dernière hypothèse, on a par adjonction 
$$h^0(K_Z)=h^0\big(K_{X_Q}(Z)\big)=h^0\big(\mathcal O_{X_Q}(1)\big) =3r+6=\rho_{r,4}(3r+7). $$   

On utilise alors le Corollaire  \ref{C:FinieAW}, pour  en déduire  le 
\bt 
Si $Z$ est un élément lisse de  $\lvert E_{X_Q}\lvert$ 
 alors le triplet 
 $(X_Q,\Sigma_{Q},Z)$ est admissible et définit  un $d_{exc}$-tissu algébrique exceptionnel  de type $(r,4)$ sur la famille de quintiques rationelles $\Sigma_{Q}$.
  \et


\subsection{Tissus  exceptionnels de type $(2,6)$ construits à partir de  $v_3(\mathbb P^3)$}
Soit  $S  $
 une surface réduite  de degré 7 dans $\mathbb P^3$.   On note ici
$$\Sigma={{SL_4(\mathbb C)}/{SL_2(\mathbb C)}}\subset {\rm Hilb}^{3t+1}(\mathbb P^3)$$
 la  famille des courbes cubiques gauches  de $\mathbb P^3$.  Un élément général  $C$ de  $\Sigma$  intersecte $S$ transversalement en  $21$ points d'après le Théorème de Bézout. Le triplet $(\mathbb P^3,\Sigma,S)$ est admissible et définit donc un $21$-tissu 
de type $(2,6)$  sur $\Sigma$, qu'on note $\mathcal T_S$. 
\smallskip

Supposons que $S$ soit lisse. Par adjonction, on a $K_S=\mathcal O_{S}(3)$ et donc $h^0(S,K_S)=20$. D'après les résultats de la Section \ref{S:Abel}, on obtient que  ${\rm rg}(\mathcal T_S)=20=\rho_{2,6}(21)$.  On en déduit le 
\bt
 Soit   $S$ une surface lisse de degré $7$ dans $ \mathbb P^3$.  Le triplet 
   $(\mathbb P^3,\Sigma,S)$ est admissible et définit un 21-tissu  algébrique exceptionnel de type $(2,6)$ sur la famille  des cubiques gauches de $\mathbb P^{3}$.  
 \et

Signalons ici que dans les ``{\it Notes added in proof}''de \cite{Griffiths}, Griffiths discutait déjà de l'annulation des traces,  par rapport à la famille des  cubiques gauches de $\mathbb P^3$, des 2-formes différentielles holomorphes  sur une surface quartique lisse.  Mais rien ne laissait deviner qu'il   
 suffisait de considérer à la place des surfaces de degré 7 et non pas 4 pour obtenir des tissus de rang maximal d'un type nouveau...


\subsection{Tissus  exceptionnels de type $(2,5)$ construits à partir de  $\tau_x(v_3(\mathbb P^3))$}
  Soit $\mu: X{\rightarrow}\mathbb P^3$ l'éclatement de $\mathbb P^3$ en  $x$.  Son groupe de Picard   est librement engendré par les classes 
   de l'image réciproque par $\mu$ d'un plan général de $\mathbb P^3$ 
  et du diviseur exceptionnel $\mu^{-1}(x)$, notées $H$ et $E$ respectivement.

   On vérifie que la classe $3H-2E\in {\rm Pic}(X)$ est très ample et que l'image de $X$ par le plongement associé est la projection tangentielle $\tau_x(v_3(\mathbb P^3))$.  D'autre part,  on a $K_X=\mu^*(K_\mathbb P^3)+2E=-4H+2E$.  On pose alors 
   $$E_X=-K_X+(3H-2E)=7H-4E\in {\rm Pic}(X). $$  
    
   Soit $S$ une surface réduite du système linéaire $  | E_X |$. 
Si $C$ est une cubique gauche  de $\mathbb P^3$ qui passe par $x$,  en général 
$C'=\mu^{-1}(C)$ intersecte $S$ transversalement en 
 $$(C'\cdot E_X)=7 (C'\cdot H)-4(C'\cdot E)=7\cdot 3-4\cdot 1=17$$ points distincts.  Par conséquent  $(X,\Sigma_3(X),S)$ est admissible et le 17-tissu associé  est de type $(2,5)$.  
 Puisque $E_X$ est très ample, $S\in |E_X|   $ générale est lisse. Dans ce cas, par adjonction, on a     $h^0(S,K_S)=16$.  
  De la Section \ref{S:Abel}, on déduit alors le 
 
 \bt 
Si $S\in \lvert  E_{X}\lvert     $ est  lisse,  alors   $({\rm Bl}_x(\mathbb P^3),\Sigma_3({{\rm Bl}_x(\mathbb P^3)}), S)$ est un triplet admissible qui définit  un 17-tissu algébrique exceptionnel de type $(2,5)$
 sur $\Sigma_3({{\rm Bl}_x(\mathbb P^3)})$.   \et

\subsection{Tissus  exceptionnels de type $(2,4)$ construits à partir de  $\tau_{xy}(v_3(\mathbb P^3))$}
Ce cas correspond au cas $r=2$ de la famille de tissus de type $(r,4) $ construite  Section \ref{S:W3r+7(4,r)} ci-dessus. 


\section{Remarques et commentaires}
\label{S:Final}
Pour finir, nous faisons plusieurs remarques qui concernent tout d'abord les résultats obtenus auparavant et certaines conclusions que l'on peut en tirer a priori sans trop de difficulté. Ensuite, nous disons quelques mots de  ce qu'il faudrait arriver à comprendre pour aboutir à une classification complète des tissus algébriques exceptionnels.

\subsection{\bf Tissus associés à un triplet admissible ${(X,\Sigma_{2n-3}(X),Z)}$ avec  $Z$ réduite}
\label{S:ZpasLisse}
Soit $X$  une variété  non-standard lisse dans la classe $\mathcal X_{r+1,n}(2n-3)$ considérée plus haut. 
On a défini une certaine classe  ample $E_X\in {\rm Pic}(X)$ telle que pour $Z\in \lvert  E_X \lvert$ lisse, le triplet $(X,\Sigma_{2n-3}(X),Z)$ est admissible et définit un tissu algébrique exceptionnel. 
Si $Z$ est seulement supposée réduite, le triplet correspondant est encore admissible et il est  naturel de  penser que le $d_{exc}$-tissu de type $(r,n)$ associé $\mathcal T_Z=\mathcal T_{(X,\Sigma_{2n-3}(X),Z)}$ est encore de rang maximal. \sk

En  effet, dans ce cas  on dispose  également  d'une suite exacte 
courte  $ 0\rightarrow   K_{X}  \rightarrow 
K_{X}( Z) {\rightarrow} \omega^r_{Z}\rightarrow 0
$ (cf. \cite[p. 228]{Aleksandrov}) qu'on peut utiliser, par un calcul tout à fait analogue à celui fait  dans le cas  lisse, pour établir que 
\begin{equation}
\label{E:H0omegaVr}
h^0(\omega_{Z}^r)=h^0(X,K_X+E_X)=\rho_{r,n}\big(d_{exc}\big).
\end{equation}

 Pour obtenir que  $\mathcal T_{Z}$ est de rang maximal, il faudrait par exemple disposer d'une version du 
  Théorème  \ref{T:AbelGriffiths} dans laquelle on aurait  remplacé les formes finies sur  $Z$  par les formes abéliennes sur cette hypersurface. Comme on l'a dit, il est naturel  de penser 
qu'une telle version existe.\smallskip 

Une autre approche pour démontrer que $\mathcal T_{Z}$  est de rang maximal quand $Z$ est seulement supposée réduite pourrait reposer sur des arguments de déforma\-tion. Comme $X$ est lisse et $ E_X$ très ample, il découle du théorème de Bertini que toute hypersurface dans  $\lvert  E_X\lvert $ est limite d'éléments lisses 
de ce système linéaire.   Par conséquent,  le tissu  $\mathcal T_{Z}$ peut être  vu comme une limite,  en un sens naturel,  de  tissus de rang maximal.  On obtiendrait que $\mathcal T_{Z}$ est lui aussi de rang maximal en montrant que la maximalité du rang est une condition fermée pour les familles analytiques de tissus  de type $(r,n)$. 
Si cela semble  intuitivement  aller de soi, ce dernier point   mériterait d'être démontré de façon rigoureuse.  

\subsection{\bf Sur l'équivalence des tissus  exceptionnels  construits précédemment} Dans cet article, nous avons construit des tissus algébri\-ques exceptionnels 
à partir de certains triplets admissibles $(X,\Sigma_{2n-3}(X),Z)$ où $X$ est une variété  non-standard lisse d'une classe $\mathcal X_{r+1,n}(2n-3)$ et $Z$ un élément du systeme linéaire $\lvert E_X\lvert$ associé à une certaine classe $E_X\in {\rm Pic}(X)$. 
En utilisant le fait que  ces tissus $\mathcal T_Z=\mathcal T_ {(X,\Sigma_{2n-3}(X),Z)}$  sont canoniques, on montre facilement que pour $Z,Z'\in \lvert  E_{X}\lvert$ réduites, les tissus $\mathcal T_Z$ et $\mathcal T_{Z'}$ sont (analytiquement) équivalents si et seulement si les hypersurfaces $Z$ et $Z'$ sont congruentes sous l'action du  groupe   des automorphismes projectifs de $X$, noté ${\rm Aut}(X)$.   \sk 

Soit $ \lvert E_{X}\lvert^{\rm o} $ le complémentaire  du fermé de Zariski formé des hypersurfaces non réduites.  De ce qui précède, il découle que l'espace des modules des classes d'équivalences analytiques des tissus $\mathcal T_Z$ s'identifie à l'espace des orbites de  $\lvert E_{X}\lvert^{\rm o} $ sous l'action de ${\rm Aut}(X)$.  Quand $X\in \mathcal X_{r+1,3}(3)$ est lisse et n'est pas un produit, 
${\rm Aut}(X)$ est  l'un des groupes de Lie complexes simples   $Sp_6(\mathbb C), SL_6(\mathbb C), SO_{12}(\mathbb C)$ ou $E_7$. On se retrouve donc ramené dans ce cas à un problème très classique (mais pas forcément facile à résoudre) de la  théorie des représentations des groupes de Lie complexes simples.

\subsection{\bf Sur la classification des tissus algébriques exceptionnels} 
Dans les sections précédentes, nous avons expliqué comment construire des tissus algébri\-ques exceptionnels
 à partir de variétés non standards des classes $\mathcal X_{r+1,n}(2n-3)$. 
Auparavant, nous avons montré que tout tissu de rang maximal non-algébrisable (au sens classique) s'obtient de cette façon. Ces résultats laissent ouvert le problème de la classification des tissus algébriques exceptionnels.   
 \sk 
 
 Vu le Théorème \ref{T:1}, cette question se ramène à la détermination de certaines  hypersurfaces des variétés non-standards des classes $ \mathcal X_{r+1,n}(2n-3)$ et donc, avant cela, passe par la classification des variétés de ce type.  C'est un horizon intéressant qui nous semble abordable, plus particulièrement dans le cas $n=3$.
 
  En effet,  d'après \cite{PR-XJC}, les variétés non-standards des classes 
 $\mathcal X_{r+1,3}(3)$ sont exactement les courbes cubiques $X_J$ sur les algèbres de Jordan $J$ de rang 3 (voir Section \ref{S:jordancubicXJ}). Par conséquent, 
 classer les $(2r+5)$-tissus  exceptionnels de type $(r,3)$ 
 demanderait de savoir décrire les hypersurfaces d'une courbe cubique $X_J$ arbitraire.  
 Ces variétés étant singulières en général, il faudrait pour cela  être en mesure de 
 décrire le groupe de Picard de $X_J$ ou de l'une de ses désingularisations. 
 \sk 
 
Le fait que le radical d'une algèbre de Jordan soit résoluble laisse entrevoir une façon naturelle de construire une désingularisation $\mu: X'_J\rightarrow X_J$ via une suite d'éclatements successifs qui ferait apparaître $X'_J$  comme l'espace total d'une tour de fibrés projectifs au dessus les uns des autres et dont la base serait la partie semi-simple et donc lisse  $X_{\!J_{\rm ss}}$ de $X_J$ (voir \cite{PR-XJC2} pour plus de détails). \`A  partir d'une telle construction, via des résultats classiques de géométrie algébrique, on pourrait  décrire ${\rm Pic}(X'_J)$ ainsi que  la classe correspondant au morphisme $X'_J\rightarrow  \mathbb PZ_2(J)$  (obtenu en composant $\mu$ et l'inclusion $X_J\subset \mathbb PZ_2(J)$) et aussi la classe canonique $K_{X'_J}$, c'est-à-dire décrire tout ce dont on a besoin pour construire et classifier des tissus exceptionnels. 
 On conjecture qu'il existe une classe $ E_{X_J'}\in {\rm Pic}(X_J')$ explicite et unique  telle qu'un triplet $(X_J,\Sigma_{3}({X_J}),Z)$ est admissible et définit un tissu algébrique exceptionnel si et seulement la transformée stricte de $Z$ par  $\mu$ est réduite et appartient au  système linéaire  $\lvert E_{X_J'} \lvert$. 
 \sk 
 
 Il est clair qu'un travail conséquent reste à fournir avant   que la stratégie présentée ci-dessus  aboutisse éventuellement. 
 Le premier cas à considérer est bien sûr celui dans lequel la variété $X$ d'une classe $\mathcal X_{r+1,3}(3)$ est lisse.  Ce qui est attendu se produit bien dans ce cas particulier  puisqu'on a le 
 \bt
  \label{T:W3XWlisse}
Soit  $\mathcal T$ un $(2r+5)$-tissu de rang maximal exceptionnel de type $(r,3)$.  Si  
  $X_{\mathcal T}$ est lisse, c'est une des variétés  de  la T{\small{ABLE}} 1 et   $Z_{\mathcal T}
  \in  \lvert E_{X}\lvert^{\rm o} $.
  De plus, 
    la trace induit un isomorphisme linéaire entre l'espace des $r$-formes abéliennes sur $Z_{\mathcal T}$ et l'espace des relations abéliennes du modèle canonique 
    ${\mathcal T}_{can}={\mathcal T}_{(X_{\mathcal T}, \Sigma_3(X_{\mathcal T}), Z_{\mathcal T})}$ de $\mathcal T$.     
\et
\begin{proof}
Sans perdre en généralité, on peut supposer que $\mathcal T$ est canonique, {\it i.e.} qu'il existe un triplet admissible $(X,\Sigma_3(X),Z)$  tel que $\mathcal T=\mathcal T _{(X,\Sigma_3(X),Z)}$
où  $Z$ est une hypersurface réduite de la variété $X$ qui est un élément lisse non-standard de la classe $ \mathcal X_{r+1,3}(3)$. 

Comme $\mathcal T$ est un $(2r+5)$-tissu, on doit avoir $(C\cdot Z)=2r+5$ pour toute courbe admissible $C\in \Sigma_3({X})$ non-incluse dans $Z$. Quand $X$ est lisse et n'est pas isomorphe au  produit de $\mathbb P^1$ avec une quadrique lisse, c'est l'une des quatre courbes cubiques $X_{\! \boldsymbol A}$  décrites 
 dans la Section \ref{S:Les quatres cas simples}.  Comme on a  ${\rm Pic}(X)\simeq \mathbb Z $ 
dans ce cas, on obtient immédiatement que $Z\in \lvert E_X \lvert^{\rm o}$.  \sk 

Supposons que  $X={\rm Seg}(\mathbb P^1\times Q)$ où $Q$ est une hyperquadrique lisse. 
Comme ${\rm Pic}(X)$ est librement engenré par $H_{\mathbb P^1}$ et $H_Q$ (voir Section \ref{S:WebExceptfromP1Q}),  il existe deux entiers   $a,b$ tels que $Z\in \lvert aH_{\mathbb P^1}+bH_Q\lvert$.  Ils  sont uniquement déterminés par $Z$ et sont positifs ou nuls puisque la classe 
$aH_{\mathbb P^1}+bH_Q$ est effective. En utilisant des arguments similaires à ceux utilisés ci-dessus, on déduit qu'ils vérifient (1)  $a+2b=2r+5$ et (2) $h^0((a-2)H_{\mathbb P^1}+(b-r)H_Q)\geq  2r+4$.  De (1), on déduit que $(a,b)=(3+2k,r+1-k)$ pour un certain entier $k\in \mathbb Z$.  En injectant cela dans l'inégalité (2), on obtient facilement que nécessairement $k=0$ et donc  $Z\in  \lvert  E _{\mathbb P^1\times Q}\lvert^{\rm o}$.\sk 

Dans tous les cas, puisque $X$ est homogène, on a $X_{\rm adm}=X$. Du point {\it (vii)} 
de  la sous-section {\it \ref{SS:141}}, il vient que les relations abéliennes de $\mathcal T$ s'obtiennent toutes à partir d'un sous-espace vectoriel de $H^0(Z,\omega_Z^r)$.  Comme 
$h^0(Z,\omega_Z^r)=2r+4={\rm rg}(\mathcal T)$ d'après \eqref{E:H0omegaVr} plus haut,  on obtient que la trace induit une identification linéaire $H^0(Z,\omega_Z^r)\simeq \mathfrak A(\mathcal T)$, ce qui termine la démonstration.
\end{proof}


\begin{thebibliography}{FMT}




\bibitem{Aleksandrov} 
{Aleksandrov A. G.}, \emph{Nonisolated hypersurface singularities}. Dans \og Theory of singularities and its applications\fg,  Adv. Soviet Math., 1, Amer. Math. Soc.  1990, p. 211--246.




\bibitem{Barlet} 
{Barlet D.},  {\it Le faisceau $\omega_X^\bullet$ sur un espace analytique $X$ de dimension pure}.  Lecture Notes in Math. {\bf 670} (1978), p. 187--204 \href{http://www.springerlink.com/content/vx7g08m168614242/}{.}

\bibitem{BB} 
{Blaschke W., Bol G.},  {Geometrie des Gewebe}.  Springer (Berlin), 1938.


\bibitem{Bol} 
{Bol G.},  \emph{Fl\"achengewebe im dreidimensionalen Raum}.  Abh. Math. Semin. Hamb. Univ. {\bf 10}
(1934), p. 119--133.





\bibitem{C}
Chern S.-S., \emph{Web geometry}. 
Bull. Amer. Math. Soc. (N.S.) Volume 6  (1982), p. 1--8.

\bibitem{CGborne}
Chern S.-S., Griffiths P. A., \emph{An inequality for the rank of a web and webs of maximum rank}. 
Ann. Scuola Norm. Sup. Pisa Cl. Sci. (4) 5 (1978), p.  539--557. 


\bibitem{CG}
Chern S.-S., Griffiths P. A., \emph{Abel's theorem and webs.} Jahr.
Deutsch. Math.-Ver. \textbf{80} (1978), p.  13--110. Voir aussi \emph{Corrections and addenda to our paper: ``Abel's theorem and webs''.} Jahr.
Deutsch. Math.-Ver. \textbf{83} (1981), p.  78--83.


\bibitem{CGadd}
Chern S.-S., Griffiths P. A., \emph{Corrections and addenda to our paper: ``Abel's theorem and webs''.} Jahr.
Deutsch. Math.-Ver. \textbf{83} (1981), p.  78--83.


\bibitem{Freudenthal} 
{Freudenthal H.}, {\it Beziehungen der $E_7$ und $E_8$ zur Oktavenebene I}. Indagationes Math. {\bf 16} (1954), p. 218--230\href{http://libgen.org/search?req=freudenthal+selecta&nametype=orig&column[]=title&column[]=author&column[]=series&column[]=periodical&column[]=publisher&column[]=year}{.} 


\bibitem{fp} {Fulton W., Pandharipande R.}, 
{\it Notes on stable maps and  quantum cohomology}. Dans Algebraic geometry -- Santa Cruz 1995, Proc. Symp. Pure Math {\bf 62} II, Amer. Math. Soc (1997), p. 45--96\href{http://libgen.org/search?req=algebraic+geometry+santa+cruz+&nametype=orig&column[]=title&column[]=author&column[]=series&column[]=periodical&column[]=publisher&column[]=year}{.}







\bibitem{Griffiths} {Griffiths P. A.},
{\it Variations on a theorem of {A}bel}. {Invent. Math.} {\bf 35} ({1976}),
   p. {321--390}\href{http://publications.ias.edu/node/214}{.}

\bibitem{GH} {Griffiths P. A., Harris J.},
{Principles of algebraic geometry}. John Wiley \& Sons,  1994\href{http://libgen.org/search?req=Principles+of+algebraic+geometry&nametype=orig&column[]=title&column[]=author&column[]=series&column[]=periodical&column[]=publisher&column[]=year}{.} 

   
   
   
\bibitem{Harris} {Harris J.},
\emph{A bound on the geometric genus of projective varieties}. 
   {Ann. Scuola Norm. Sup. Pisa} {\bf 8} ({1981}), p. {35--68}.






\bibitem{hartshorne} 
{Hartshorne R.}, {Algebraic geometry}. 
Graduate Texts in Math 52, Springer-Verlag, 1977\href{http://libgen.org/search?req=Algebraic+geometry+hartshorne&nametype=orig&column[]=title&column[]=author&column[]=series&column[]=periodical&column[]=publisher&column[]=year}{.}


\bibitem{HenkinPassare}
{Henkin G., Passare M.}, {\it Abelian differentials on singular varieties and variations on a theorem of} {\it  Lie-Griffiths}.  Invent. Math. {\bf 135} (1999), p. 297--328\href{http://www.springerlink.com/content/pdbwg7heqfwcpucv/}{.}




\bibitem{Humphreys} 
{Humphreys J. E.}, 
{Linear algebraic groups.}
Graduate Texts in Mathematics, No. 21. Springer-Verlag,  1975\href{http://gen.lib.rus.ec/search?req=Humphreys+linear+algebraic+groups&nametype=orig&column[]=title&column[]=author&column[]=series&column[]=periodical&column[]=publisher&column[]=year}{.} 




\bibitem{jacobson}
{Jacobson N.}, {Structure and representations of Jordan algebras}. American Mathematical Society Colloquium Publications, Vol. XXXIX AMS,  1968\href{http://libgen.org/search?req=Structure+and+representations+of+Jordan+algebras&nametype=orig&column[]=title&column[]=author&column[]=series&column[]=periodical&column[]=publisher&column[]=year}{.}




\bibitem{kollar} {Koll\'ar J.}, {Rational curves on algebraic varieties}.  Ergebnisse der Mathematik {\bf 32}, Springer-Verlag, Berlin, 1996\href{http://libgen.org/search?req=kollar+rational+curves&nametype=orig&column[]=title&column[]=author&column[]=series&column[]=periodical&column[]=publisher&column[]=year}{.}



\bibitem{LandsbergManivel}
 {Landsberg J. M., Manivel L.}, 
 {\it The projective geometry of Freudenthal's magic square}. J. Algebra {\bf 239} (2001),  p. 477--512\href{http://www.sciencedirect.com/science/article/pii/S0021869300986976}{.}


\bibitem{Lie} {Lie S.}, {\em Bestimmung aller Fl{\"a}schen, die in
    mehrfacher Weise durch Translationsbewegung einer Kurve erzeugt
    werden}. Arch. f{\"u}r Math. Bd. {\bf 7}  (1882), p. 450--467.



\bibitem{MPP} {Mar\'{\i}n D., Pereira J. V. et Pirio L.},
{\it On planar webs with infinitesimal automorphisms}. Dans {\og}Inspired by
Chern{\fg}, Nankai Tracts in Math. {\bf 11} (2006),  p. 351--364\href{http://libgen.org/search?req=inspired+by+chern&nametype=orig&column[]=title&column[]=author&column[]=series&column[]=periodical&column[]=publisher&column[]=year}{.}






 \bibitem{PPbook} {Pereira J. V.,  Pirio L.}, 
 \href{http://www.springer.com/mathematics/algebra/book/978-3-319-14561-7?wt_mc=Alerts.NBA.SpringerAuthors-Mrz-1}
{An invitation to web geometry}. IMPA Monographs, Vol. 2.  Springer 2015. 






\bibitem{piriorusso}
 {Pirio L.,  Russo F.},  \href{http://www.ems-ph.org/journals/show_abstract.php?issn=0010-2571&vol=88&iss=3&rank=8}
{\em{On projective varieties $n$-covered by curves of degree $\delta$}}. Comment. Math. Helv. {\bf 88} (2013), p. 715-756.

 
  \bibitem{PR-XJC2} {Pirio L.,  Russo F.},   
  {\em Quadro-quadric Cremona maps and varieties 3-connected by cubics: semi-simple part and radical}.
  Internat. J. Math. {\bf 24} (2013). \href{http://www.worldscientific.com/doi/abs/10.1142/S0129167X1350105X}{Doi:10.1142/S0129167X1350105X.}
 
 
   
\bibitem{PR-XJC} {Pirio L.,  Russo F.},   
  \href{http://www.degruyter.com/dg/viewarticle/j$002fcrelle.ahead-of-print$002fcrelle-2014-0052$002fcrelle-2014-0052.xml;jsessionid=EA70AF632A8824F8B99C0F205362270C}{
 {\em The {$XJC$}-correspondence}}.   
  J. Reine Angew. Math.  {(2014)}. Doi:10.1515/crelle-2014-0052.
 
 

 
 
 

\bibitem{PTgeom} {Pirio L., Tr\'epreau J.-M.}, 
{\it Sur les variétés $X$ dans $\mathbb P^N$ telles que par $n$ points passe une courbe de} {\it  $X$ de degré donné}.  Bulletin de la S.M.F.
{\bf 141}, fascicule 1 (2013), p. 131--196\href{http://smf4.emath.fr/Publications/Bulletin/141/html/smf_bull_141_131-196.php}{.}


\bibitem{PTweb} {Pirio L.,  Tr\'epreau J.-M.},  
\href{http://imrn.oxfordjournals.org/content/early/2014/05/14/imrn.rnu066}{
{\it Sur l'alg\'ebrisation des tissus de rang maximal}}.   {Int. Math. Res. Not. IMRN}.,  {(2014)}.  Doi: {10.1093/imrn/rnu066}. 



\bibitem{Poincare} {Poincaré H.}, 
{\it Sur les surfaces de translations  et les fonctions abéliennes}. Bull. Soc. Math. France {\bf 29} (1901), p. 61--86\href{http://www.numdam.org/item?id=BSMF_1901__29__61_0}{.}

\bibitem{Popov} {Popov V. L.}, 
{\it Picard groups of homogeneous spaces of linear algebraic groups and one-dimensional} {\it homogeneous vector fiberings.}
 Math. USSR-Izv. {\bf 8} (1974), p.  301--327\href{http://iopscience.iop.org/0025-5726/8/2/A04}{.}







\bibitem{Trepreau} {Tr\'{e}preau J.-M.}, 
{\it
Alg\'{e}brisation des Tissus de Codimension $1$} -- {\it La g\'{e}n\'{e}ralisation
d'un Th\'{e}or\`{e}me de Bol}. 
Dans {\og}Inspired by
Chern{\fg}, Nankai Tracts in Math. {\bf 11} (2006), p. 399--433\href{http://libgen.org/search?req=inspired+by+chern&nametype=orig&column[]=title&column[]=author&column[]=series&column[]=periodical&column[]=publisher&column[]=year}{.}





\end{thebibliography}
\end{document}